\newtheorem{thm}{Theorem}[section]
\newtheorem{lem}[thm]{Lemma}
\newtheorem{cor}[thm]{Corollary}
\newtheorem{defn}[thm]{Definition}
\newtheorem{rem}[thm]{Remark}
\numberwithin{equation}{section}
\title{The Cauchy problem for the energy-critical inhomogeneous nonlinear Schr\"{o}dinger equation with inverse-square potential}
\author{{\bf RoeSong Jang, JinMyong An, JinMyong Kim$^*$}\\
\footnotesize{Faculty of Mathematics, {\bf Kim Il Sung} University, Pyongyang, Democratic People's Republic of Korea}\\
\footnotesize{$^*$ Corresponding Author}\\
\footnotesize{Email address: jm.kim0211@ryongnamsan.edu.kp}
}
\date{}
\begin{document}
\maketitle
\begin{abstract}
In this paper, we study the Cauchy problem for the
energy-critical inhomogeneous nonlinear Schr\"{o}dinger equation with inverse-square potential
\[iu_{t} +\Delta u-c|x|^{-2}u=\lambda|x|^{-b} |u|^{\sigma } u,\;
u(0)=u_{0} \in H^{1},\;(t,x)\in \mathbb R\times\mathbb R^{d},\]
where $d\ge3$, $\lambda=\pm1$, $0<b<2$, $\sigma=\frac{4-2b}{d-2}$ and  $c>-c(d):=-\left(\frac{d-2}{2}\right)^{2}$.
We first prove the local well-posedness as well as small data global well-posedness and scattering in $H^{1}$ for $c>-\frac{(d+2-2b)^{2}-4}{(d+2-2b)^{2}}c(d)$ and $0<b<\frac{4}{d}$, by using the contraction mapping principle based on the Strichartz estimates.
Based on the local well-posedness result, we then establish the blowup criteria for solutions to the equation in the focusing case $\lambda=-1$. To this end, we derive the sharp Hardy-Sobolev inequality and virial estimates related to this equation.
\end{abstract}

\noindent {\bf Keywords}: Inhomogeneous nonlinear Schr\"{o}dinger equation; Inverse-square potential; Energy-critical; Well-posedness; Blowup; Hardy-Sobolev inequality; Virial estimates\\

\noindent {\bf MR(2020) Subject Classification}: 35Q55, 35A01, 35B44
\section{Introduction}

In this paper, we consider the Cauchy problem for the inhomogeneous nonlinear
Schr\"{o}dinger equation with inverse-square potential, denoted by INLS$_{c}$ equation,
\begin{equation} \label{GrindEQ__1_1_}
\left\{\begin{array}{l} {iu_{t}-P_{c}u=\lambda |x|^{-b}
|u|^{\sigma } u,\;(t,x)\in\mathbb R\times\mathbb R^{d},}
\\ {u\left(0,\; x\right)=u_{0}
\left(x\right),} \end{array}\right.
\end{equation}
where $d\ge3$, $u:\;\mathbb R\times \mathbb R^{d} \to \mathbb C$, $u_{0} :\;\mathbb R^{d} \to \mathbb C$, $b,\;\sigma >0$, $\lambda=\pm1$ and $P_{c}=-\Delta u+c|x|^{-2}$ with $c>-c(d):=-\left(\frac{d-2}{2}\right)^{2}$. $\lambda=-1$ corresponds to the focusing case
and $\lambda=1$ corresponds to the defocusing case. The restriction on $c$ comes from the sharp Hardy inequality:
\begin{equation}\label{GrindEQ__1_2_}
\frac{(d-2)^{2}}{4}\int_{\mathbb R^{d}}{|x|^{-2}\left|u(x)\right|^{2}dx}\le\int_{\mathbb R^{d}}{\left|\nabla u(x)\right|dx},~\forall u\in H^{1}(\mathbb R^{d}),
\end{equation}
which ensures that $P_{c}$ is a positive operator. The INLS$_{c}$ equation appears in a variety of physical settings, for example, in nonlinear optical systems with spatially dependent interactions (see e.g. \cite{BPVT07} and the references therein). In particular, when $c = 0$, it can be thought of as modeling inhomogeneities in the medium in which the wave propagates (see e.g. \cite{KMVBT17}). When $b = 0$, the equation \eqref{GrindEQ__1_1_} also appears in various areas of physics, for instance in quantum field equations, or in the study of certain black hole solutions of the Einstein equations (see e.g. \cite{BPST03,KSWW75}).

The case $b=c=0$ is the classic nonlinear Schr\"{o}dinger (NLS) equation which has been been widely studied over the last three decades (see e.g. \cite{C03, LP15, WHHG11} and the references therein). The case $b=0$ and $c\neq0$ is known as the NLS equation with inverse-square potential, denoted by NLS$_{c}$ equation, which has also been extensively studied in recent years (see e.g. \cite{D18p,KMVZZ17,KMVZ17,MM18, Y21} and the references therein). Moreover, when $c=0$ and $b\neq0$, we have the inhomogeneous nonlinear Schr\"{o}dinger (INLS) equation, which has also attracted a lot of interest in recent years (see e.g. \cite{AK211,AC21,C21,D18,G17} and the references therein).

On the other hand, the inhomogeneous nonlinear Schr\"{o}dinger with potential in the following form:
\begin{equation} \label{GrindEQ__1_3_}
\left\{\begin{array}{l} {iu_{t}+\Delta u-Vu=\lambda |x|^{-b}
|u|^{\sigma } u,\;(t,x)\in\mathbb R\times\mathbb R^{d},}
\\ {u\left(0,\; x\right)=u_{0}
\left(x\right).} \end{array}\right.
\end{equation}
has also been studied by several authors in recent years.
For example, Dinh \cite{D21} studied the well-posedness, scattering and blowup for \eqref{GrindEQ__1_3_} when $d=3$, $b>0$, $\lambda=\pm1$ and $V$ is a real-valued potential satisfying $V\in K_{0}\cap L^{\frac{3}{2}}$ and $\left\|V_{-}\right\|<4\pi$, where $V_{-}:=\min\left\{V,\;0\right\}$ and $K_{0}$ is defined as  the closure of bounded and compactly supported functions with respect to the Kato norm
$$
\left\|V\right\|_{K}:=\sup_{x\in\mathbb R^{3}}\int_{\mathbb R^3}{\frac{|V(y)|}{|x-y|}dy.}
$$
Luo \cite{L19} also studied the stability and multiplicity of standing waves for \eqref{GrindEQ__1_3_} with $V=|x|^{2}$ (harmonic potential), $\lambda=-1$ and $b<0$.
The case $V(x)=c|x|^{-2}$ with $c>-c(d)$ (inverse-square potential) and $b>0$ was considered by \cite{CG21, S16}.

In this paper, we are interested in \eqref{GrindEQ__1_3_} with $V(x)=c|x|^{-2}$ with $c>-c(d)$ and $b>0$, i.e. we study the INLS$_{c}$ equation \eqref{GrindEQ__1_1_}.

Before recalling the known results for the INLS$_{c}$ equation \eqref{GrindEQ__1_1_} and stating our main results, let us give some information about this equation. The INLS$_c$ equation \eqref{GrindEQ__1_1_} is invariant under the scaling,
$$
u_{\lambda}(t,x):=\lambda^{\frac{2-b}{\sigma}}u\left(\lambda^{2}t,\lambda x\right),~\lambda>0.
$$
An easy computation shows that
$$
\left\|u_{\lambda}(0)\right\|_{\dot{H}^{s}}=\lambda^{s-\frac{d}{2}+\frac{2-b}{\sigma}}\left\|u_0\right\|_{\dot{H}^{s}},
$$
which implies that the critical Sobolev index is given by
\begin{equation}\label{GrindEQ__1_4_}
s_{c}=\frac{d}{2}-\frac{2-b}{\sigma}.
\end{equation}
Note that, if $s_{c}=0$ (alternatively $\sigma=\sigma_{\star}:=\frac{4-2b}{d}$) the problem is known as the mass-critical or $L^{2}$-critical; if $s_{c}=1$ (alternatively $\sigma=\sigma^{\star}:=\frac{4-2b}{d-2}$) it is called energy-critical or $\dot{H}^1$-critical. The problem is known as intercritical (mass-supercritical and energy-subcritical) if $0<s_{c}<1$ (alternatively $\sigma_{\star}<\sigma<\sigma^{\star}$).
On the other hand, solutions to the INLS$_{c}$ equation \eqref{GrindEQ__1_1_} conserve the mass and energy, defined respectively by
\begin{equation} \label{GrindEQ__1_5_}
M\left(u(t)\right):=\int_{\mathbb R^{d}}{\left|u(t, x)\right|^{2}dx},
\end{equation}
\begin{equation} \label{GrindEQ__1_6_}
E_{b,c}\left(u(t)\right):=\int_{\mathbb R^{d}}{\frac{1}{2}\left|\nabla u(t, x)\right|^{2}+\frac{c}{2}|x|^{-2}\left|u(t, x)\right|^{2}+\frac{\lambda
}{\sigma+2} |x|^{-b} \left|u(t, x)\right|^{\sigma +2} dx}.
\end{equation}

Let us recall the known results for the INLS$_{c}$ equation \eqref{GrindEQ__1_1_}. Using the energy method, Suzuki \cite{S16} showed that if
 \footnote[1]{\ Note that the author in \cite{S16} considered \eqref{GrindEQ__1_1_} with $c=-c(d)$. The authors in \cite{CG21} pointed out that the proof for the case $c>-c(d)$ is an immediate consequence of the previous one.}
 $d\ge3$, $0<\sigma<\sigma^{\star}$, $c>-c(d)$ and $0<b<2$, then the INLS$_{c}$ equation \eqref{GrindEQ__1_1_} is locally well-posed in $H^{1}_{c}(\mathbb R^{d})$ (which is equivalent to $H^{1}(\mathbb R^{d}))$. It was also proved that any local solution of \eqref{GrindEQ__1_1_} with $u_{0}\in H^{1}_{c}(\mathbb R^{d})$ extends globally in time if either $\lambda=1$ (defocusing case) or $0<\sigma<\sigma_{\star}$ for $\lambda=-1$ (focusing, mass-subcritical case). Recently, Campos-Guzm\'{a}n \cite{CG21} established the sufficient conditions for global existence and blowup in $H^{1}_{c}(\mathbb R^{d})$ for $d\ge 3$, $\lambda=-1$ and $\sigma_{\star}\le\sigma<\sigma^{\star}$, using a Gagliardo-Nirenberg-type estimate. They also studied the local well-posedness and small data global well-posedness under some assumption on $b$ and $c$ in the energy-subcritical case $\sigma<\sigma^{\star}$ with $d\ge 3$ by using the standard Strichartz estimates combined with the fixed point argument. Furthermore, they showed a scattering criterion and construct a wave operator in $H^{1}_{c}(\mathbb R^{d})$, for the intercritical case.
As mentioned above, the authors in \cite{CG21, S16} studied the local and global well-posedness as well as blowup and scattering in $H^{1}_{c}(\mathbb R^{d})$ with $d\ge3$ for the INLS$_{c}$ equation \eqref{GrindEQ__1_1_} in the energy--subcritical case $\sigma<\sigma^{\star}\left(=\frac{4-2b}{d-2}\right)$.

In this paper, we study the well-posedness and blowup in $H^{1}(\mathbb R^{d})$ with $d\ge 3$ for the INLS$_{c}$ equation \eqref{GrindEQ__1_1_} in the energy-critical case $\sigma=\sigma^{\star}=\frac{4-2b}{d-2}$.

First, we prove the local well-posedness as well as small data global well-posedness and scattering by using the contraction mapping principle based on Strichartz estimates.

\begin{thm}\label{thm 1.1.}
Let $d\ge3$, $0<b<\frac{4}{d}$, $\sigma=\frac{4-2b}{d-2}$ and $c>-\frac{(d+2-2b)^{2}-4}{(d+2-2b)^2}c(d)$. If $u_{0} \in H^{1} (\mathbb R^{d})$, then there exists $T=T(u_{0})>0$ such that \eqref{GrindEQ__1_1_} has a unique solution \begin{equation} \label{GrindEQ__1_7_}
u\in L^{\gamma (r)} \left(\left[-T,\;T\right],\;H^{1,r}(\mathbb R^{d})\right),
\end{equation}
where $\left(\gamma (r),\;r\right)$ is an admissible pair satisfying
\begin{equation} \label{GrindEQ__1_8_}
r=\frac{2d(d+2-2b)}{d^2-2db+4} .
\end{equation}
Moreover, for any admissible pair $\left(\gamma \left(p\right),\;p\right)$, we have
\begin{equation} \label{GrindEQ__1_9_}
u\in L^{\gamma \left(p\right)} \left(\left[-T,\;T\right],\;H^{1,p}(\mathbb R^{d})\right).
\end{equation}
If $\left\| u_{0} \right\| _{\dot{H}^{1}(\mathbb R^{d}) } $ is sufficiently small, then the above solution is global and scatters.
\end{thm}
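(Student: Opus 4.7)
The plan is a Cazenave-Weissler-style contraction mapping argument for the Duhamel form
\[
\Phi(u)(t):=e^{-itP_c}u_0 - i\lambda\int_0^t e^{-i(t-s)P_c}\bigl(|x|^{-b}|u|^{\sigma}u\bigr)(s)\,ds,
\]
carried out in the Strichartz space $X_T:=L^{\gamma(r)}([-T,T];H^{1,r}(\mathbb R^d))$ with $r$ from \eqref{GrindEQ__1_8_} and $\gamma(r)$ fixed by Schr\"odinger admissibility. I would work on the closed ball $\mathcal B:=\{u\in X_T:\|u\|_{X_T}\le M,\ \|u\|_{L^{\gamma(r)}_t\dot H^{1,r}}\le\eta\}$ with $M\sim\|u_0\|_{H^1}$ and $\eta$ small, equipped with the weak metric $\delta(u,v):=\|u-v\|_{L^{\gamma(r)}_tL^r_x}$, which sidesteps any fractional chain rule at the contraction step. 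Two inputs from the literature drive the argument: (i) the Strichartz estimates for $e^{-itP_c}$, valid for every Schr\"odinger-admissible pair as soon as $c>-c(d)$ (after Burq-Planchon-Stalker-Tahvildar-Zadeh); and (ii) the equivalence of Sobolev norms $\|(-\Delta)^{s/2}f\|_{L^p}\sim\|P_c^{s/2}f\|_{L^p}$ of Killip-Miao-Visan-Zhang-Zheng, valid on an interval of $p$ whose endpoints depend on $c$. The explicit lower bound on $c$ in the hypothesis is exactly what forces $r$ from \eqref{GrindEQ__1_8_} and its H\"older dual $r'$ into this interval, so that $P_c$-derivatives and ordinary derivatives can be interchanged throughout.

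The heart of the proof is the nonlinear estimate
\[
\bigl\||x|^{-b}|u|^\sigma u\bigr\|_{L^{\gamma(r)'}_t H^{1,r'}_x}\le C\,\|u\|_{L^{\gamma(r)}_t\dot H^{1,r}}^{\sigma}\,\|u\|_{L^{\gamma(r)}_t H^{1,r}},
\]
together with a Lipschitz analogue in $\delta$. I would prove it by H\"older in space, peeling off $\sigma$ copies of $u$ controlled via Sobolev embedding by the $\dot H^{1,r}$ norm, absorbing the singular weight $|x|^{-b}$ through a Hardy-Sobolev inequality, distributing $\nabla$ across $|u|^\sigma u$ with the fractional chain rule, and then H\"older in time. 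The restriction $0<b<4/d$ is what keeps the resulting H\"older exponents consistent. Substituting into Strichartz yields $\|\Phi(u)\|_{X_T}\le C\|u_0\|_{H^1}+C\eta^\sigma M$ and the matching Lipschitz bound in $\delta$. For arbitrary $u_0\in H^1$ one makes $\eta$ small by shrinking $T$: the free-solution piece $\|e^{-itP_c}u_0\|_{L^{\gamma(r)}_t\dot H^{1,r}}$ tends to $0$ as $T\to0$ by dominated convergence after a frequency split, so $\mathcal B$ is stable and $\Phi$ contracts, yielding the local solution \eqref{GrindEQ__1_7_}. The persistence \eqref{GrindEQ__1_9_} in all other admissible pairs then follows from one more application of Strichartz.

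For small data $\|u_0\|_{\dot H^1}\ll 1$ the Strichartz bound $\|e^{-itP_c}u_0\|_{L^{\gamma(r)}_t\dot H^{1,r}}\lesssim\|u_0\|_{\dot H^1}$ is already uniformly small on $\mathbb R$, so the fixed-point argument runs with $T=\infty$, producing a global solution with finite $L^{\gamma(r)}_t\dot H^{1,r}$ norm on $\mathbb R$; scattering in $H^1$ then follows by the standard argument that $\{e^{itP_c}u(t)\}_t$ is Cauchy as $t\to\pm\infty$. The main obstacle is the nonlinear estimate: the singular weight $|x|^{-b}$, the $\sigma$-power, and one derivative all have to be distributed across H\"older indices that simultaneously lie inside the Hardy-Sobolev range and inside the Killip-Miao-Visan-Zhang-Zheng range of norm equivalence, and energy-criticality leaves no time factor free to absorb slack; the choice \eqref{GrindEQ__1_8_} together with the restrictions $0<b<4/d$ and the lower bound on $c$ are all forced by this single H\"older balancing.
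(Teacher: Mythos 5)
Your architecture is the same as the paper's: a contraction for the Duhamel map on a two-norm ball in $L^{\gamma(r)}_t H^{1,r}_x$ with the weak metric $\|u-v\|_{L^{\gamma(r)}_tL^r_x}$, driven by the Strichartz estimates for $e^{-itP_c}$, the Killip--Miao--Visan--Zhang--Zheng norm equivalence (which is exactly where the lower bound on $c$ enters), the fractional Hardy inequality (which is where $b<4/d$ enters), and smallness of the free evolution as $T\to0$; the small-data global/scattering step is also identical. However, your central nonlinear estimate is stated with the wrong exponents and is false as written: you cannot place $|x|^{-b}|u|^{\sigma}u$ in $L^{\gamma(r)'}_tH^{1,r'}_x$. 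Spatial dimensional analysis forces the Lebesgue exponent: if each factor is controlled by $\dot H^{1,r}$ (or $\nabla u\in L^r$), then homogeneity of $|x|^{-b}$ gives $\frac1m=\frac{b-\sigma}{d}+\frac{\sigma+1}{r}$, which evaluates to $\frac{d+2}{2d}$, i.e.\ $m=\bar r'$ with $\bar r=\frac{2d}{d-2}$, and $\bar r'\neq r'$ for $0<b<2$. Likewise $\gamma(r)'\neq\gamma(r)/(\sigma+1)$, so the time H\"older does not close either; the two errors happen to be mutually consistent under parabolic scaling, which is why a global scaling check does not catch them. The paper's resolution is to pair the Duhamel term with the dual of the \emph{endpoint} admissible pair $\bigl(2,\tfrac{2d}{d+2}\bigr)$: then the spatial H\"older/Hardy/Sobolev bookkeeping lands exactly on $\bar r'=\tfrac{2d}{d+2}$, and in time one has the clean identity $\frac{\sigma+1}{\gamma(r)}=\frac12=\frac{1}{\gamma(\bar r)'}$ (see \eqref{GrindEQ__3_7_}). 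Correspondingly, the norm equivalence you need is $\dot H^{1,\bar r'}_c\sim\dot H^{1,\bar r'}$ (not for $r'$). With that one correction your argument coincides with the paper's proof.
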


\begin{rem}\label{rem 1.2.}
\textnormal{Theorem \ref{thm 1.1.} can be seen as the extension of the well-posedness result of NLS$_{c}$ equation (see Proposition 3.3 of \cite{D18p}) to the INLS$_{c}$ equation. }
\end{rem}
\begin{rem}\label{rem 1.3.}
\textnormal{In Theorem \ref{thm 1.1.}, the restriction $b<\frac{4}{d}$ comes from the fractional Hardy inequality (Lemma \ref{lem 3.1.}). And the restriction $c>-\frac{(d+2-2b)^{2}-4}{(d+2-2b)^2}c(d)$ comes from the equivalence of Sobolev spaces $\dot{H}^{1,r}_{c}\sim\dot{H}^{1,r}$.}
\end{rem}

Based on the local well-posedness result above, we study the blowup phenomena for the focusing, energy--critical INLS$_{c}$ equation.

Let $0<b<2$, $c>-c(d)$, and let $C_{HS}(b,c)$ be the sharp constant in the Hardy-Sobolev inequality related to the focusing, energy--critical INLS$_{c}$ equation \eqref{GrindEQ__1_1_}, namely,
\begin{equation}\nonumber
C_{HS}(b,c)=\inf_{f\in \dot{H}^{1}_{c}\setminus\left\{0\right\}}{\frac{\left\|f\right\|_{\dot{H}^{1}_{c}}}
{\left\||x|^{-b}|f|^{\sigma^{\star}+2}\right\|
^{\frac{1}{\sigma^{\star}+2}}_{L^{1}}}}.
\end{equation}
We will see in Lemma \ref{lem 4.1.} that:
\begin{enumerate}
  \item When $-c(d)<c\le0$, the sharp constant $C_{HS}(b,c)$ is attained by the function
  \begin{equation}\label{GrindEQ__1_10_}
  W_{b,c}(x):=\frac{\left[\varepsilon(d-b)(d-2)\beta^{2}\right]^{\frac{d-2}{4-2b}}}
  {\left[\varepsilon+|x|^{(2-b)\beta}\right]^{\frac{d-2}{2-b}}|x|^{\rho}},
  \end{equation}
  with $\beta=1-\frac{2\rho}{d-2}$, for all $\varepsilon>0$ (see \eqref{GrindEQ__2_2_} for the definition of $\rho$).
  \item If $c>0$, $C_{HS}(b,c)\le C_{HS}(b,0)$.
\end{enumerate}

We have the following blowup result for the focusing, energy-critical INLS$_{c}$ equation.

\begin{thm}\label{thm 1.4.}
Let $d\ge 3$, $0<b<\frac{4}{d}$, $\lambda=-1$, $c>-\frac{(d+2-2b)^{2}-4}{(d+2-2b)^2}c(d)$ and $\sigma=\frac{4-2b}{d-2}$. Let $u_{0}\in H^{1}(\mathbb R^{d})$ and $u$ be the corresponding solution to \eqref{GrindEQ__1_1_}. Suppose that either $E_{b,c}(u_{0})<0$, or if $E_{b,c}(u_{0})\ge0$, we assume that $E_{b,c}(u_{0})<E_{b,\bar{c}}(W_{b,\bar{c}})$ and $\left\|u_{0}\right\|_{\dot{H}^{1}_{c}}>\left\|W_{b,\bar{c}}\right\|_{\dot{H}^{1}_{\bar{c}}}$, where $\bar{c}=\min\left\{c,\;0\right\}$. If $xu_{0}\in L^{2}$ or $u_{0}$ is radial, then the solution $u$ blows up in finite time.
\end{thm}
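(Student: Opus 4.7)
The plan is the classical Glassey-type convexity obstruction for the second moment $I(t)=\int_{\mathbb R^{d}}|x|^{2}|u(t,x)|^{2}\,dx$ (localized in the radial case), combined with the Hardy--Sobolev variational characterization of $W_{b,\bar c}$ to propagate the above-threshold condition in time.

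First I would derive the virial identity: a direct formal computation for smooth decaying solutions, using that $c|x|^{-2}$ is scale invariant and so enters only through $\|u\|_{\dot H^{1}_{c}}^{2}$, yields
\begin{equation*}
I''(t)=8\,\|u(t)\|_{\dot H^{1}_{c}}^{2}-\frac{4(d\sigma+2b)}{\sigma+2}\int_{\mathbb R^{d}}|x|^{-b}|u(t,x)|^{\sigma+2}\,dx.
\end{equation*}
At the critical exponent $\sigma=\tfrac{4-2b}{d-2}$ one checks $(d\sigma+2b)/(\sigma+2)=2$, so the coefficient collapses to $8$; eliminating $\int|x|^{-b}|u|^{\sigma+2}$ via the conservation of $E_{b,c}$ then gives the working form
\begin{equation*}
I''(t)=\frac{16(d-b)}{d-2}\,E_{b,c}(u_{0})-\frac{8(2-b)}{d-2}\,\|u(t)\|_{\dot H^{1}_{c}}^{2}.
\end{equation*}
The identity is justified for $u_{0}\in H^{1}$ with $xu_{0}\in L^{2}$ by regularization and persistence of the second moment along the flow. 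In the regime $E_{b,c}(u_{0})<0$ this is already enough: $I''(t)\le 16E_{b,c}(u_{0})<0$, so $I(t)$ becomes negative in finite time, contradicting $I(t)\ge 0$ and forcing blowup.

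For the complementary regime $0\le E_{b,c}(u_{0})<E_{b,\bar c}(W_{b,\bar c})$ and $\|u_{0}\|_{\dot H^{1}_{c}}>\|W_{b,\bar c}\|_{\dot H^{1}_{\bar c}}$, I must propagate the strict inequality in norms. Using the Hardy--Sobolev inequality with sharp constant $C_{HS}(b,\bar c)$, together with the bound $\|u\|_{\dot H^{1}_{0}}\le\|u\|_{\dot H^{1}_{c}}$ (relevant when $c>0$ and $\bar c=0$), I obtain in both cases
\begin{equation*}
E_{b,c}(u)\ \ge\ g\bigl(\|u\|_{\dot H^{1}_{c}}\bigr),\qquad g(y):=\tfrac{1}{2}y^{2}-\tfrac{C_{HS}(b,\bar c)^{-(\sigma+2)}}{\sigma+2}\,y^{\sigma+2}.
\end{equation*}
The Pohozaev identity for $W_{b,\bar c}$ and its extremality (so that $\int|x|^{-b}|W|^{\sigma+2}=\|W\|_{\dot H^{1}_{\bar c}}^{2}=C_{HS}(b,\bar c)^{-(\sigma+2)}\|W\|_{\dot H^{1}_{\bar c}}^{\sigma+2}$) show that $g$ attains its maximum at $y^{*}=\|W_{b,\bar c}\|_{\dot H^{1}_{\bar c}}$ with value $g(y^{*})=E_{b,\bar c}(W_{b,\bar c})$, and is strictly decreasing on $(y^{*},\infty)$. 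Continuity of $t\mapsto\|u(t)\|_{\dot H^{1}_{c}}$, conservation of $E_{b,c}$, and the initial condition $\|u_{0}\|_{\dot H^{1}_{c}}>y^{*}$ therefore confine $\|u(t)\|_{\dot H^{1}_{c}}$ to $[\alpha,\infty)$ for some $\alpha>y^{*}$ independent of $t$. Plugging this into the virial identity yields $I''(t)\le\tfrac{16(d-b)}{d-2}\bigl(E_{b,c}(u_{0})-\tfrac{2-b}{2(d-b)}\alpha^{2}\bigr)=:-\delta<0$, and the concavity contradiction again closes.

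When $u_{0}$ is radial but $xu_{0}\notin L^{2}$, I would replace $|x|^{2}$ by a smooth radial cutoff $\phi_{R}(x)=R^{2}\phi(|x|/R)$ with $\phi(r)=r^{2}$ on $r\le 1$, $\phi''\le 2$ everywhere, and $\phi$ bounded on $r\ge 2$, and compute $\tfrac{d^{2}}{dt^{2}}\!\int\phi_{R}|u|^{2}\,dx$ in the same fashion. The bulk term reproduces the estimate of the previous paragraph, while the error terms, supported in $|x|\gtrsim R$, can be controlled via the radial Strauss-type decay estimate and a tail bound on $\int_{|x|\gtrsim R}|x|^{-b}|u|^{\sigma+2}\,dx$ from Hardy--Sobolev; taking $R$ sufficiently large in terms of the fixed trapping gap $\delta$ absorbs these errors and the truncated moment still goes negative in finite time. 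The main technical obstacle I anticipate is exactly this localized radial virial, since the simultaneous presence of the inverse-square weight and of the inhomogeneous weight $|x|^{-b}$ must be balanced carefully against the radial decay and against the equivalence $\dot H^{1}_{c}\sim\dot H^{1}$ from Remark~\ref{rem 1.3.}, which is where the hypotheses $0<b<4/d$ and $c>-\bigl((d+2-2b)^{2}-4\bigr)/(d+2-2b)^{2}\,c(d)$ are expected to reappear.
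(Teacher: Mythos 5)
Your route coincides with the paper's: the standard virial identity with the coefficient collapse at $\sigma=\frac{4-2b}{d-2}$, the working form $I''(t)=\frac{16(d-b)}{d-2}E_{b,c}(u_0)-\frac{8(2-b)}{d-2}\|u(t)\|_{\dot H^1_c}^2$, the sharp Hardy--Sobolev coercivity $E_{b,c}(u)\ge g(\|u\|_{\dot H^1_c})$ with $g$ peaking at $\|W_{b,\bar c}\|_{\dot H^1_{\bar c}}$ with value $E_{b,\bar c}(W_{b,\bar c})$, and a continuity/trapping argument giving $\|u(t)\|_{\dot H^1_c}\ge\alpha>\|W_{b,\bar c}\|_{\dot H^1_{\bar c}}$, are exactly the ingredients of Section 4 (your $\alpha$ plays the role of the paper's $(1+\delta')\|W_{b,\bar c}\|_{\dot H^1_{\bar c}}$; the opposite sign of the exponent on $C_{HS}$ is only the reciprocal normalization of the sharp constant and is internally consistent). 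The two cases with $xu_0\in L^2$ are correct as you wrote them.

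The genuine gap is in the radial case. Having first reduced the bulk term to the constant $-\delta$, you claim the localized-virial errors are "absorbed by taking $R$ sufficiently large in terms of $\delta$." That step fails as stated: the only quantities controlled uniformly in $t$ are the mass and the energy, and the tail term produced by the radial Strauss estimate is of size $R^{-\frac{(d-1)\sigma+2b}{2}}\|u(t)\|_{\dot H^1_c}^{\sigma/2}$, which is \emph{not} uniformly small for fixed $R$, because $\|u(t)\|_{\dot H^1_c}$ is precisely the quantity expected to blow up. The paper's mechanism (Lemma \ref{lem 4.6.} and \eqref{GrindEQ__4_17_}) is the needed fix: since $\sigma<4$, Young's inequality splits the tail into $\varepsilon\|u(t)\|_{\dot H^1_c}^{2}+\varepsilon^{-\frac{\sigma}{4-\sigma}}R^{-\frac{2[(d-1)\sigma+2b]}{4-\sigma}}$, and the $\varepsilon\|u(t)\|_{\dot H^1_c}^{2}$ piece must be absorbed into the strictly negative quadratic part of the virial expression \emph{before} sending $R\to\infty$ --- via the trapping margin in the above-threshold case (that is the content of \eqref{GrindEQ__4_17_}), or via the negativity of the coefficient $-\frac{8(2-b)}{d-2}$ together with $E_{b,c}(u_0)<0$ in the negative-energy case. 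Since your working form retains the quadratic term, the repair is available to you, but the proposal as written skips it. A minor misattribution: the hypotheses $0<b<\frac4d$ and $c>-\frac{(d+2-2b)^2-4}{(d+2-2b)^2}c(d)$ are not needed for the localized virial (which only requires $0<b<2$, $c>-c(d)$); they enter solely through the local well-posedness theory, cf.\ Remark \ref{rem 1.5.}.
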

\begin{rem}\label{rem 1.5.}
\textnormal{
\begin{enumerate}
  \item In Theorem \ref{thm 1.4.}, the restrictions on $b$ and $c$ only come from the local well-posedness result (Theorem \ref{thm 1.1.}). If we can prove the local existence of solution for the wider range of $b$ and $c$, the result of Theorem \ref{thm 1.4.} still holds.
  \item Theorem \ref{thm 1.4.} can be seen as the extension of the blowup result of NLS$_{c}$ equation (see Theorem 1.12 of \cite{D18p}) to the INLS$_{c}$ equation.
\end{enumerate}
}\end{rem}

This paper is organized as follows. In Section 2, we recall some useful facts which are used in this paper. In Section 3, we prove Theorem \ref{thm 1.1.}. In Section 4, we derive the sharp Hardy-Sobolev inequality and virial estimates related to INLS$_{c}$ equation to prove Theorem \ref{thm 1.4.}.

\section{Preliminaries}

Let us introduce the notation used throughout the paper. As usual, we use $\mathbb C$, $\mathbb R$ and $\mathbb N$ to stand for the sets of complex, real and natural numbers, respectively. $C>0$ will denote positive universal constant, which can be different at different places. $a\lesssim b$ means $a\le Cb$ for some constant $C>0$. We also write $a\sim b$ if $a\lesssim b \lesssim a$.
We denote by $p'$ the dual number of $p\in \left[1,\;\infty \right]$,
i.e. $1/p+1/p'=1$. As in \cite{WHHG11}, for $s\in \mathbb R$ and $1<p<\infty $, we denote by $H^{s,p} (\mathbb R^{d} )$ and $\dot{H}^{s,p} (\mathbb R^{d} )$ the usual nonhomogeneous and homogeneous Sobolev spaces associated to the Laplacian $-\Delta$. As usual, we abbreviate $H^{s,2} (\mathbb R^{d} )$ and $\dot{H}^{s,2} (\mathbb R^{d} )$ as $H^{s} (\mathbb R^{d})$ and $\dot{H}^{s} (\mathbb R^{d} )$, respectively.
Similarly, we define Sobolev spaces in terms of $P_{c}$ via
$$
\left\|f\right\|_{\dot{H}_{c}^{s,p}(\mathbb R^{d})}=\left\|(P_{c})^{\frac{s}{2}}f\right\|_{L^{r}(\mathbb R^{d})},~
\left\|f\right\|_{H_{c}^{s,p}(\mathbb R^{d})}=\left\|(1+P_{c})^{\frac{s}{2}}f\right\|_{L^{r}(\mathbb R^{d})}.
$$
We also abbreviate $\dot{H}_{c}^{s}(\mathbb R^{d})=\dot{H}_{c}^{s,2}(\mathbb R^{d})$ and $H_{c}^{s}(\mathbb R^{d})=H_{c}^{s,2}(\mathbb R^{d})$. Note that by sharp Hardy inequality \eqref{GrindEQ__1_2_}, we see that
\begin{equation}\label{GrindEQ__2_1_}
\left\|f\right\|_{\dot{H}_{c}^{1}(\mathbb R^{d})}\sim \left\|f\right\|_{\dot{H}^{1}(\mathbb R^{d})}~\textrm{for}~ c>-c(d).
\end{equation}
For $I\subset \mathbb R$ and $\gamma \in \left[1,\;\infty \right]$, we will use the space-time mixed space $L^{\gamma } \left(I,\;X\left(\mathbb R,^{d}
\right)\right)$ whose norm is defined by
\[\left\| f\right\|_{L^{\gamma } \left(I,\;X(\mathbb R^{d})\right)}
=\left(\int _{I}\left\| f\right\| _{X(\mathbb R^{d})}^{\gamma } dt
\right)^{\frac{1}{\gamma } } ,\]
with a usual modification when $\gamma =\infty $, where $X(\mathbb R^{d})$
is a normed space on $\mathbb R^{d} $. Given normed spaces $X$ and $Y$, $X\subset Y$ means that $X$ is continuously embedded in $Y$, i.e. there exists a constant $C\left(>0\right)$ such that $\left\| f\right\| _{Y} \le C\left\| f\right\| _{X} $ for all $f\in X$. If there is no confusion, $\mathbb R^{d} $ will be omitted in various function spaces.

Next, we recall the equivalence between the usual Sobolev space defined by $-\Delta$ and the one defined by $P_{c}$. For convenience, we define the following number:
\begin{equation}\label{GrindEQ__2_2_}
\rho:=\frac{d-2}{2}-\sqrt{\left(\frac{d-2}{2}\right)^{2}+c}.
\end{equation}
\begin{lem}[Equivalence of Sobolev spaces, \cite{KMVZZ18}]\label{lem 2.1.}
Let $d\ge3$, $c>-c(d)$ and $0<s<2$.
\begin{enumerate}
  \item If $1<p<\infty$ satisfies $\frac{s+\rho}{d}<\frac{1}{p}<\min\left\{1,\frac{d-\rho}{d}\right\}$, then $\left\|f\right\|_{\dot{H}^{s,p}}\lesssim\left\|f\right\|_{\dot{H}_{c}^{s,p}}$ for all $f\in C^{\infty}_{0}(\mathbb R^{d}\setminus\left\{0\right\})$.
  \item If $1<p<\infty$ satisfies $\max\left\{\frac{s}{d},\;\frac{\rho}{d}\right\}<\frac{1}{p}<\min\left\{1,\frac{d-\rho}{d}\right\}$, then $\left\|f\right\|_{\dot{H}^{s,p}_{c}}\lesssim\left\|f\right\|_{\dot{H}^{s,p}}$ for all $f\in C^{\infty}_{0}(\mathbb R^{d}\setminus\left\{0\right\})$.
\end{enumerate}
\end{lem}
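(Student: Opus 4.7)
The strategy I would follow is the one implicit in the cited work of Killip--Miao--Visan--Zhang--Zheng: reduce the comparison of $(-\Delta)^{s/2}$ and $P_c^{s/2}$ on $L^p$ to the boundedness, on $L^p$, of the composition operators $(-\Delta)^{s/2}P_c^{-s/2}$ (for Part 1) and $P_c^{s/2}(-\Delta)^{-s/2}$ (for Part 2). Indeed, by density of $C_0^\infty(\mathbb R^d\setminus\{0\})$ and the fact that $P_c^{\pm s/2}$ intertwine $\dot H^{s,p}_c$ with $L^p$ (and analogously for $-\Delta$), the two inequalities are equivalent to the $L^p$-boundedness of those compositions. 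Both operators will be analyzed via the subordination identity
\[
A^{-s/2}=\frac{1}{\Gamma(s/2)}\int_0^\infty t^{s/2-1}e^{-tA}\,dt,\qquad A\in\{-\Delta,\,P_c\},
\]
which converts the problem into one about pointwise estimates on the semigroup $e^{-tP_c}$.

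The main input is the two-sided Gaussian heat-kernel bound, due (in this setting) to Liskevich--Sobol and Milman--Semenov:
\[
e^{-tP_c}(x,y)\lesssim t^{-d/2}e^{-|x-y|^2/(ct)}\Bigl(1\vee\tfrac{\sqrt t}{|x|}\Bigr)^{\rho}\Bigl(1\vee\tfrac{\sqrt t}{|y|}\Bigr)^{\rho},
\]
with $\rho$ as in \eqref{GrindEQ__2_2_}, together with a matching lower bound. The Bessel-type weights $(1\vee\sqrt t/|x|)^\rho$ quantify exactly how much the potential $c|x|^{-2}$ distorts the standard Gaussian kernel, and they reduce to $1$ precisely when $c=0$.

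Plugging this into the subordination formula yields a kernel representation of $P_c^{s/2}(-\Delta)^{-s/2}$ (respectively $(-\Delta)^{s/2}P_c^{-s/2}$) that is pointwise comparable to the Riesz potential kernel $|x-y|^{s-d}$ modulated by the weights $(|x|/|x-y|\wedge 1)^{-\rho}$ and $(|y|/|x-y|\wedge 1)^{-\rho}$. Boundedness on $L^p$ then reduces to the Stein--Weiss weighted Hardy--Littlewood--Sobolev inequality: the two-sided window $\rho/d<1/p<(d-\rho)/d$ is exactly what makes multiplication by $|x|^{-\rho}$ (and by $|x|^{\rho}$) admissible at this $L^p$, while the extra factor $s/d<1/p$ in Part 2, respectively $(s+\rho)/d<1/p$ in Part 1, is what allows the Riesz potential of order $s$ to absorb these weights in the two different compositions. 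Part 1 needs the extra shift because applying $(-\Delta)^{s/2}$ on the left brings in an additional weight factor.

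I expect the genuinely hard step to be the heat-kernel bound itself: this requires a spherical-harmonic decomposition that turns $P_c$ into a Bessel operator on each angular sector, followed by Hankel-transform and comparison-principle estimates of the sort developed in Milman--Semenov, and the sharp extraction of the exponent $\rho$ is delicate. Once that bound is in hand, the passage to the Sobolev-norm equivalence through subordination plus Stein--Weiss is, while technical, essentially bookkeeping; the endpoint cases $1/p=\rho/d$ and $1/p=(d-\rho)/d$ are intentionally excluded because that is where the weighted inequality fails.
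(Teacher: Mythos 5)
The paper does not prove Lemma \ref{lem 2.1.} at all: it is quoted verbatim, with the stated $p$-ranges, from the cited reference \cite{KMVZZ18}, so there is no internal proof to measure your proposal against. Judged on its own terms, your outline correctly identifies the architecture of the cited argument: the result does reduce to restricted-range $L^{p}$ mapping properties of $P_{c}$ relative to $-\Delta$, the decisive input is indeed the two-sided Liskevich--Sobol/Milman--Semenov heat-kernel bound with the weights $\bigl(1\vee\sqrt{t}/|x|\bigr)^{\rho}$, and your explanation of where the windows $\rho/d<1/p<(d-\rho)/d$, the extra condition $s/d<1/p$ in Part 2, and the shift to $(s+\rho)/d$ in Part 1 come from matches the mechanism in \cite{KMVZZ18}.

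The one step you should not present as ``essentially bookkeeping'' is the passage from the heat-kernel bound to the $L^{p}$-boundedness of $(-\Delta)^{s/2}P_{c}^{-s/2}$ and $P_{c}^{s/2}(-\Delta)^{-s/2}$. The subordination identity you quote only represents the \emph{negative} power $A^{-s/2}$; composing with a positive fractional power of the other operator does not yield a positive kernel pointwise comparable to a weighted Riesz potential, so the problem does not literally reduce to a Stein--Weiss inequality. This is precisely why the authors of \cite{KMVZZ18} first build a Littlewood--Paley theory for $P_{c}$ out of the heat-kernel bounds --- Bernstein-type inequalities for the spectral projections, square-function estimates, and a Mikhlin-type multiplier theorem --- and then obtain the Sobolev-norm equivalence by summing frequency-localized pieces, with the restricted $p$-ranges entering through the $L^{p}\to L^{q}$ bounds for $e^{-tP_{c}}$. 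Your sketch would need to be routed through that machinery (or an equally careful substitute for the composition operators) to become a proof; as written, the middle step contains a genuine gap even though the inputs and the final bookkeeping of exponents are correct.
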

\begin{rem}\label{rem 2.2.} Let $0<s<2$.
\begin{enumerate}
  \item When $c>0$, $\left\|f\right\|_{\dot{H}^{s,p}_{c}}$ is equivalent to $\left\|f\right\|_{\dot{H}^{s,p}}$, provided that $1<p<\frac{d}{s}$.
  \item When $-c(d)\le c<0$, $\left\|f\right\|_{\dot{H}^{s,p}_{c}}$ is equivalent to $\left\|f\right\|_{\dot{H}^{s,p}}$, provided that $\frac{d}{d-\rho}<p<\frac{d}{s+\rho}$.
\end{enumerate}
\end{rem}
We end this section by recalling the Strichartz estimates for the INLS$_{c}$ equation \eqref{GrindEQ__1_1_}.
\begin{defn}\label{defn 2.3.}
\textnormal{Let $d\ge3$. We say that a pair $(\gamma(p),p)$ is \textbf{admissible}, if
\begin{equation} \label{GrindEQ__2_3_}
2\le p\le \frac{2d}{d-2},~\frac{2}{\gamma (p)} =\frac{d}{2} -\frac{d}{p}.
\end{equation}}
\end{defn}
\begin{lem}[Strichartz estimates, \cite{BM18,BPST03}]\label{lem 2.4.}
Let $d\ge3$ and $c>-c(d)$. Then for any $s\in\mathbb R$ and any admissible pairs $(\gamma(p),p)$, $(\gamma(r),r)$, we have
\begin{equation}\label{GrindEQ__2_4_}
\left\|e^{-itP_{c}}f\right\| _{L^{\gamma (p)} (\mathbb R,\;\dot{H}_{c}^{s,p} )} \lesssim\left\|f \right\| _{\dot{H}_{c}^{s} },
\end{equation}
\begin{equation}\label{GrindEQ__2_5_}
\left\|\int _{0}^{t}e^{-i(t-\tau)P_{c}}f(\tau)d\tau\right\| _{L^{\gamma (p)} (\mathbb R,\;\dot{H}_{c}^{s,p} )} \lesssim\left\| f\right\| _{L^{\gamma (r)'} (\mathbb R,\;\dot{H}_{c}^{s,r'} )}.
\end{equation}
\end{lem}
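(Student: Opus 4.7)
The plan is to solve the Duhamel equation
\[
\Phi(u)(t) := e^{-itP_c}u_0 - i\lambda\int_0^t e^{-i(t-\tau)P_c}\bigl(|x|^{-b}|u|^\sigma u\bigr)(\tau)\,d\tau
\]
by a Banach fixed-point argument on a ball in an appropriate Strichartz space. I would take the admissible pair $(\gamma(r),r)$ from \eqref{GrindEQ__1_8_} as the ``master'' pair — a direct computation gives $\gamma(r)=\frac{2(d+2-2b)}{d-2}$, and $2\le r\le\frac{2d}{d-2}$ follows from $0<b<4/d$. On $I=[-T,T]$ I would work in
\[
X(I) := L^\infty(I,H^1_c)\cap L^{\gamma(r)}(I,\dot H^{1,r}_c),
\]
equipped with the weaker metric $d(u,v)=\|u-v\|_{L^{\gamma(r)}(I,L^r)}$ (so that no derivative ever falls on a difference of $|u|^\sigma u$), over a ball of radius $\rho$ proportional to $\|u_0\|_{H^1}$.

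The linear piece is controlled by Lemma \ref{lem 2.4.} combined with the equivalence \eqref{GrindEQ__2_1_} and Lemma \ref{lem 2.1.}, which requires precisely the hypothesis $c>-\frac{(d+2-2b)^2-4}{(d+2-2b)^2}c(d)$ in order to secure $\dot H^{1,r}_c\sim\dot H^{1,r}$ (and, for the nonlinear estimate below, also $\dot H^{1,r'}_c\sim\dot H^{1,r'}$); this is exactly the equivalence pointed to in Remark \ref{rem 1.3.}. By the dual Strichartz estimate \eqref{GrindEQ__2_5_} with the pair $(\gamma(r),r)$ on both sides, everything reduces to the nonlinear estimate
\[
\bigl\||x|^{-b}|u|^\sigma u\bigr\|_{L^{\gamma(r)'}(I,\dot H^{1,r'})}\lesssim T^\theta\,\|u\|_{L^{\gamma(r)}(I,\dot H^{1,r})}^{\sigma+1}
\]
together with a companion difference bound $\|F(u)-F(v)\|_{L^{\gamma(r)'}L^{r'}}\lesssim T^\theta(\|u\|_X+\|v\|_X)^\sigma\|u-v\|_{L^{\gamma(r)}L^r}$, where $F(u)=|x|^{-b}|u|^\sigma u$.

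The hard part will be this nonlinear estimate. Writing $\nabla(|x|^{-b}|u|^\sigma u)$ as $|x|^{-b}|u|^\sigma\nabla u$ plus $|x|^{-b-1}|u|^{\sigma+1}$, the second piece is absorbed into the first by a standard Hardy inequality, and it suffices to bound $\||x|^{-b}|u|^\sigma\nabla u\|_{L^{\gamma(r)'}L^{r'}}$. I would apply Hölder in $x$ placing $\nabla u\in L^r$ and distributing the $\sigma$ remaining factors through the Sobolev embedding $\dot H^{1,r}\hookrightarrow L^{dr/(d-r)}$, after which the singular weight $|x|^{-b}$ is absorbed by the fractional Hardy inequality (Lemma~3.1 of the paper, as promised) — this is where the restriction $b<4/d$ enters, exactly as Remark \ref{rem 1.3.} warns. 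A bookkeeping check using $\sigma(d-2)=4-2b$ shows that both the space and time exponents balance (the estimate is scale invariant, consistent with $s_c=1$). For large data with local existence, I would trade one factor of $\|u\|_{L^{\gamma(r)}_t}$ for $L^\infty_t$ via Hölder in time, picking up $T^\theta$ with $\theta=1/\gamma(r)>0$.

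Once the nonlinear estimate is in hand, Banach's fixed-point theorem yields existence and uniqueness in $B_\rho$ for $T=T(\|u_0\|_{H^1})$ small. The additional regularity $u\in L^{\gamma(p)}(I,H^{1,p})$ for every admissible $(\gamma(p),p)$ follows by feeding the fixed point back into \eqref{GrindEQ__2_4_}--\eqref{GrindEQ__2_5_} with the left-hand pair arbitrary and the already-controlled pair $(\gamma(r),r)$ on the right. For sufficiently small $\|u_0\|_{\dot H^1}$, Strichartz on $I=\mathbb R$ makes the linear piece globally small, the scale-invariant nonlinear estimate closes with $\theta=0$, and the contraction runs on the whole line; scattering then follows from the standard argument that $e^{itP_c}u(t)$ is Cauchy in $H^1$ as $t\to\pm\infty$, with increments bounded by the Duhamel tail and the global nonlinear bound just proved.
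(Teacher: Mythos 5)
Your proposal does not prove the statement it was assigned. The statement is Lemma \ref{lem 2.4.}, the Strichartz estimates for the propagator $e^{-itP_{c}}$ of the Schr\"odinger operator with inverse-square potential: the homogeneous bound \eqref{GrindEQ__2_4_} and the inhomogeneous (Duhamel) bound \eqref{GrindEQ__2_5_} in the $P_c$-adapted Sobolev spaces $\dot H^{s,p}_c$. What you have written is instead a contraction-mapping proof of the local and small-data global well-posedness of the INLS$_c$ equation --- that is, a sketch of Theorem \ref{thm 1.1.} --- and it \emph{invokes} Lemma \ref{lem 2.4.} as a black box in its very first substantive step (``The linear piece is controlled by Lemma \ref{lem 2.4.}\dots'', ``By the dual Strichartz estimate \eqref{GrindEQ__2_5_}\dots''). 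Relative to the assigned statement this is circular: you assume exactly the estimates you were asked to establish.

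A genuine proof of Lemma \ref{lem 2.4.} has nothing to do with the nonlinearity $|x|^{-b}|u|^{\sigma}u$ or with fixed-point arguments. For $s=0$ one needs a dispersive or, in the endpoint-free range, uniform resolvent/local smoothing estimate for $e^{-itP_c}$, fed into the Keel--Tao abstract Strichartz machinery; this is the content of Burq--Planchon--Stalker--Tahvildar-Zadeh \cite{BPST03} and Bouclet--Mizutani \cite{BM18}, which the paper cites precisely because it does not reprove the lemma. The extension to general $s$ then follows by commuting $(P_c)^{s/2}$ with the propagator via the functional calculus for $P_c$, which is why the lemma is naturally stated in the spaces $\dot H^{s,p}_c$ rather than $\dot H^{s,p}$. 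None of these ingredients appears in your write-up. (As a proof of Theorem \ref{thm 1.1.} your outline is essentially the paper's own Section 3 argument --- same master pair $r=\frac{2d(d+2-2b)}{d^2-2db+4}$, same weak metric $d(u,v)=\|u-v\|_{L^{\gamma(r)}L^{r}}$, same use of the fractional Hardy inequality and the Sobolev-equivalence restriction on $c$ --- but that is not the statement under review.)
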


\section{Local and global well-posedness}

In this section, we prove Theorem \ref{thm 1.1.}.
To establish the nonlinear estimates, we recall the following fractional Hardy inequality which is a direct consequence of Theorem 3.1 of \cite{HYZ12}.
\begin{lem}[Fractional Hardy Inequality]\label{lem 3.1.}
Let $1<p<\infty$ and $0<s<\frac{d}{p} $. Then we have
\[\left\| |x|^{-s} f\right\| _{L^{p}(\mathbb R^{d})} \lesssim\left\| f\right\| _{\dot{H}^{s,p} (\mathbb R^{d})}.\]
\end{lem}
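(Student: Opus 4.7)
The plan is to reduce the fractional Hardy inequality to an $L^{p}$-boundedness statement for a positive integral operator of homogeneous degree zero, and then to verify that boundedness by Schur's test. Writing $g = (-\Delta)^{s/2}f$ so that $\|f\|_{\dot H^{s,p}} = \|g\|_{L^{p}}$, one recovers $f$ as the Riesz potential
\[
f(x) \;=\; I_{s}g(x) \;=\; c_{d,s}\int_{\mathbb R^{d}}\frac{g(y)}{|x-y|^{d-s}}\,dy.
\]
Thus the claim is equivalent to showing that the operator
\[
Tg(x) \;:=\; \int_{\mathbb R^{d}} K(x,y)\,g(y)\,dy, \qquad K(x,y)\;:=\;\frac{1}{|x|^{s}\,|x-y|^{d-s}},
\]
is bounded on $L^{p}(\mathbb R^{d})$ under the sole assumption $0<s<d/p$.

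Since $K\ge 0$ is homogeneous of degree $-d$, I would apply Schur's test with the homogeneous weight $\varphi(y):=|y|^{-\alpha}$. Substituting $y=|x|z$ in the first Schur integral and $x=|y|z$ in the second, the kernel's homogeneity makes the expected powers of $|x|$ and $|y|$ factor out exactly, so what remains is the finiteness of
\[
\int_{\mathbb R^{d}}\frac{|z|^{-\alpha p'}}{|e-z|^{d-s}}\,dz \qquad\text{and}\qquad \int_{\mathbb R^{d}}\frac{|z|^{-\alpha p - s}}{|e-z|^{d-s}}\,dz,
\]
where $e$ denotes an arbitrary unit vector. Checking the three singularities $z=0$, $z=e$, $z=\infty$ in each integral yields the admissible ranges $\tfrac{s}{p'}<\alpha<\tfrac{d}{p'}$ and $0<\alpha<\tfrac{d-s}{p}$, respectively.

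The main (and really the only) obstacle is to verify that the intersection of these two ranges is nonempty. The binding constraint is $\tfrac{s}{p'}<\tfrac{d-s}{p}$, and an elementary rearrangement turns this precisely into $s(p-1)<d-s$, i.e.\ $sp<d$, which is exactly the hypothesis $s<d/p$. Hence a valid $\alpha$ exists, Schur's test applies, and $T$ is bounded on $L^{p}$ with constant depending only on $d$, $s$, $p$; substituting $g=(-\Delta)^{s/2}f$ back in completes the proof. An entirely alternative route, which I would mention but not follow, is to observe that $|x|^{-sp}$ is a Muckenhoupt $A_{p}$ weight precisely when $0<s<d/p$ and to invoke the weighted Hardy--Littlewood--Sobolev inequality; the Schur-test argument above is more self-contained and makes the single-hypothesis role of $s<d/p$ transparent.
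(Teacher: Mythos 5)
Your argument is correct, and it is genuinely different from what the paper does: the paper gives no proof at all, simply quoting the inequality as a direct consequence of Theorem~3.1 of Hajaiej--Yu--Zhai \cite{HYZ12}, where it is obtained in the more general Lorentz-norm setting. Your route --- writing $f=I_{s}g$ with $g=(-\Delta)^{s/2}f$ and applying Schur's test with the weight $|y|^{-\alpha}$ to the degree-$(-d)$-homogeneous positive kernel $|x|^{-s}|x-y|^{-(d-s)}$ --- is self-contained and elementary, and your bookkeeping checks out: the two Schur integrals converge precisely for $\tfrac{s}{p'}<\alpha<\tfrac{d}{p'}$ and $0<\alpha<\tfrac{d-s}{p}$ respectively, and the nonemptiness of the intersection reduces, as you say, exactly to $sp<d$. (This is of course the diagonal $p=q$, $\beta=0$ case of the Stein--Weiss inequality, and the classical proof of that inequality is essentially the computation you carried out.) The only point worth a sentence in a written version is the reduction itself: the identity $f=I_{s}\bigl((-\Delta)^{s/2}f\bigr)$ and the domination $|I_{s}g|\le I_{s}|g|$ should be justified on a dense subclass (e.g.\ Schwartz functions with Fourier transform vanishing near the origin, which is dense in $\dot H^{s,p}$ for $0<s<d/p$) and then extended by density; note also that $0<s<d/p<d$ guarantees the Riesz potential representation is available. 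What your approach buys is transparency --- the hypothesis $s<d/p$ appears exactly once, as the compatibility condition for the Schur weight --- at the cost of not giving the sharper Lorentz-space refinement that the cited reference provides (and which the paper does not actually need).
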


Using Lemma \ref{lem 3.1.}, we have the following nonlinear estimates.
\begin{lem}\label{lem 3.2.}
Let $\bar{r}=\frac{2d}{d-2}$, $r=\frac{2d(d+2-2b)}{d^{2}-2db+4}$, $0<b<\frac{4}{d}$ and $\sigma=\frac{4-2b}{d-2}$. Then we have
\begin{equation}\label{GrindEQ__3_1_}
\left\||x|^{-b}|u|^{\sigma}u\right\|_{\dot{H}^{1, \bar{r}'}}\lesssim\left\|u\right\|_{\dot{H}^{1, r}}^{\sigma+1},
\end{equation}
\begin{equation}\label{GrindEQ__3_2_}
\left\||x|^{-b}|u|^{\sigma}v\right\|_{\dot{H}^{1, \bar{r}'}} \lesssim\left\|u\right\|_{\dot{H}^{1, r}}^{\sigma}\left\|v\right\|_{L^{r}}.
\end{equation}
\end{lem}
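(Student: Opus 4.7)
The strategy is to use the equivalence $\|\cdot\|_{\dot H^{1,\bar r'}}\sim \|\nabla\cdot\|_{L^{\bar r'}}$, carry out a Leibniz decomposition of the nonlinearity, and combine H\"{o}lder's inequality with the fractional Hardy inequality (Lemma \ref{lem 3.1.}) and a homogeneous Sobolev embedding. Both (3.1) and (3.2) will reduce to the single auxiliary bound
\[
\bigl\||x|^{-b}|u|^{\sigma}\bigr\|_{L^{q}}\lesssim \|u\|_{\dot H^{1,r}}^{\sigma},\qquad\text{where }\ \frac{1}{q}:=\frac{1}{\bar r'}-\frac{1}{r}=\frac{2(d-b)}{d(d+2-2b)}.
\]

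To establish this auxiliary bound I would write $|x|^{-b}|u|^{\sigma}=\bigl(|x|^{-b/\sigma}|u|\bigr)^{\sigma}$, so that $\||x|^{-b}|u|^{\sigma}\|_{L^{q}}=\||x|^{-b/\sigma}u\|_{L^{q\sigma}}^{\sigma}$. Fractional Hardy (Lemma \ref{lem 3.1.}) with $s=b/\sigma$ and exponent $q\sigma$ gives $\||x|^{-b/\sigma}u\|_{L^{q\sigma}}\lesssim \|u\|_{\dot H^{b/\sigma,q\sigma}}$, and the homogeneous Sobolev embedding $\dot H^{1,r}\hookrightarrow \dot H^{b/\sigma,q\sigma}$ finishes the task. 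Three elementary conditions have to be checked: the Hardy admissibility $b<d/q$ reduces, after clearing denominators, to $(2-b)(d-2b)>0$, and hence holds for $0<b<4/d$ (noting $4/d<d/2$ when $d\ge 3$); the Sobolev scaling identity $1-d/r=b/\sigma-d/(q\sigma)$ is a direct algebraic consequence of the explicit formulas for $r$, $\sigma$, and $q$; and $b/\sigma\le 1$ is exactly the assumption $b\le 4/d$.

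Granting the auxiliary bound, (3.1) follows from the pointwise Leibniz estimate
\[
\bigl|\nabla\bigl(|x|^{-b}|u|^{\sigma}u\bigr)\bigr|\lesssim |x|^{-b-1}|u|^{\sigma+1}+|x|^{-b}|u|^{\sigma}|\nabla u|,
\]
combined with H\"{o}lder's inequality with exponents $(q,r)$. The first summand is factored as $\bigl(|x|^{-b}|u|^{\sigma}\bigr)\cdot\bigl(|x|^{-1}u\bigr)$ and controlled by the auxiliary bound together with the classical Hardy inequality $\||x|^{-1}u\|_{L^{r}}\lesssim \|u\|_{\dot H^{1,r}}$ (valid since $r<d$, which I would verify from the formula for $r$); the second summand is bounded directly by the auxiliary bound times $\|\nabla u\|_{L^{r}}=\|u\|_{\dot H^{1,r}}$. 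Adding these yields $\|u\|_{\dot H^{1,r}}^{\sigma+1}$. The proof of (3.2) is entirely parallel: the Leibniz rule for $\nabla\bigl(|x|^{-b}|u|^{\sigma}v\bigr)$ produces analogous terms, and H\"{o}lder with the same $(q,r)$ split, together with the auxiliary bound applied to the $u$-factors, gives the claimed estimate with the remaining factor carrying $v$.

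The main obstacle is not any single analytic step but the careful parameter bookkeeping. No step is deep, but the constraints are tightly interlocked: the hypothesis $0<b<4/d$ is sharp in the sense that it simultaneously validates the fractional Hardy threshold, the nonnegativity of the Sobolev "gap" $1-b/\sigma$, and (through $r<d$) the classical Hardy inequality used at the end. These three constraints are linked through the explicit formula \eqref{GrindEQ__1_8_} for $r$, and making all scaling exponents balance is the place where the computation is most likely to go wrong.
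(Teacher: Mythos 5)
Your treatment of \eqref{GrindEQ__3_1_} is correct and follows essentially the paper's own route: the same pointwise Leibniz decomposition, with each piece handled by H\"older, the fractional Hardy inequality of Lemma \ref{lem 3.1.} and a homogeneous Sobolev embedding at the scaling-dictated exponents. Your only deviation is in the term $|x|^{-b-1}|u|^{\sigma+1}$: the paper writes it as $\||x|^{-\frac{b+1}{\sigma+1}}u\|_{L^{\rho}}^{\sigma+1}$ and applies Hardy--Sobolev once, whereas you factor it as $\bigl(|x|^{-b}|u|^{\sigma}\bigr)\cdot\bigl(|x|^{-1}u\bigr)$ and use your auxiliary bound together with the classical Hardy inequality; this is legitimate, since $b<\tfrac{4}{d}\le\tfrac{d}{2}$ indeed gives $r<d$. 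The parameter checks you list ($b<d/q$, the scaling identity $1-\tfrac{d}{r}=\tfrac{b}{\sigma}-\tfrac{d}{q\sigma}$, and $b/\sigma\le1\Leftrightarrow b\le\tfrac4d$) are all correct.

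The gap is in \eqref{GrindEQ__3_2_}. As printed, the left-hand side of \eqref{GrindEQ__3_2_} is a typo: with the $\dot H^{1,\bar r'}$ norm there the inequality is false, because a derivative falls on $v$ and $\||x|^{-b}|u|^{\sigma}\nabla v\|_{L^{\bar r'}}$ cannot be controlled by $\|v\|_{L^{r}}$ (fix $u$ and take $v(x)=e^{in\cdot x}\chi(x)$ with $\chi$ a bump equal to $1$ on the support of $u$: the right side is independent of $n$ while the left side grows like $|n|$). The statement actually proved in the paper, and the only one used later (in \eqref{GrindEQ__3_9_}, \eqref{GrindEQ__3_11_}, \eqref{GrindEQ__3_12_} and \eqref{GrindEQ__3_15_}), carries the $L^{\bar r'}$ norm on the left; that version is exactly your auxiliary bound followed by one application of H\"older with exponents $(q,r)$, and no Leibniz rule should be invoked at all. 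Your proposal instead declares \eqref{GrindEQ__3_2_} ``entirely parallel'' to \eqref{GrindEQ__3_1_} via the Leibniz rule, which does not work: besides the fatal $\nabla v$ term just mentioned, the term where the derivative hits $|u|^{\sigma}$ is of size $|x|^{-b}|u|^{\sigma-1}|\nabla u|\,|v|$ and does not fit your $(q,r)$ split either. So you should drop the Leibniz step for \eqref{GrindEQ__3_2_}, state it with $L^{\bar r'}$ on the left, and derive it in one line from the auxiliary bound and H\"older, which is precisely what the paper does.
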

\begin{proof}
Noticing that
\begin{equation}\label{GrindEQ__3_3_}
\left|\nabla\left(|x|^{-b}|u|^{\sigma}u\right)\right|\lesssim|x|^{-b-1}|u|^{\sigma+1}+|x|^{-b}|u|^{\sigma}|\nabla u|,
\end{equation}
we have
\begin{eqnarray}\begin{split}\label{GrindEQ__3_4_}
\left\| |x|^{-b} |u|^{\sigma } u\right\| _{\dot{H}_{\bar{r}'}^{1} } =\left\| \nabla\left(|x|^{-b} |u|^{\sigma } u\right)\right\| _{\bar{r}'}
\lesssim \left\| |x|^{-b-1}|u|^{\sigma+1}\right\| _{\bar{r}'}+\left\||x|^{-b}|u|^{\sigma}\nabla u\right\| _{\bar{r}'}.
\end{split}\end{eqnarray}
First we estimate $\left\| |x|^{-b-1}|u|^{\sigma+1}\right\| _{\bar{r}'}$. We can see that
\begin{equation} \nonumber
\frac{1}{\bar{r}'}=\left(\sigma +1\right)\left(\frac{1}{r} -\frac{1}{d} \left(1-\frac{b+1}{\sigma +1} \right)\right) .
\end{equation}
Putting
\begin{equation} \nonumber
\frac{1}{\rho } :=\frac{1}{r} -\frac{1}{d} \left(1-\frac{b+1}{\sigma +1} \right),
\end{equation}
we have $\dot{H}^{1-\frac{b+1}{\sigma +1},r} \subset L^{\rho }$. Here, we use the fact $1-\frac{b+1}{\sigma +1}>0 \Leftrightarrow b<\frac{4}{d}$. Using Lemma 3.1, we have
\begin{equation}\label{GrindEQ__3_5_}
\left\||x|^{-b-1} |u|^{\sigma +1} \right\| _{\bar{r}'} =\left\| |x|^{-\frac{b+1}{\sigma +1} } u\right\| _{\rho }^{\sigma +1} \lesssim\left\| |x|^{-\frac{b+1}{\sigma +1} } u\right\| _{\dot{H}^{1-\frac{b+1}{\sigma +1},r} }^{\sigma +1}\lesssim\left\| u\right\| _{\dot{H}^{1, r}}^{\sigma +1}.
\end{equation}
Next we estimate $\left\| |x|^{-b} |u|^{\sigma } \nabla u\right\| _{\bar{r}'} $. We get
\begin{equation} \nonumber
\sigma \left(\frac{1}{r} -\frac{1}{d} \left(1-\frac{b}{\sigma } \right)\right)+\frac{1}{r} =\frac{1}{r'} .
\end{equation}
Putting
\begin{equation} \nonumber
\frac{1}{\gamma } :=\frac{1}{r} -\frac{1}{d} \left(1-\frac{b}{\sigma } \right),
\end{equation}
and noticing $1-\frac{b}{\sigma } >0$, we have $\dot{H}^{1-\frac{b}{\sigma}, r } \subset L^{\gamma } $. Hence it follows from H\"{o}lder inequality and Lemma 3.1 that
\begin{equation} \label{GrindEQ__3_6_}
\left\| |x|^{-b} |u|^{\sigma } \nabla u\right\| _{\bar{r}'} \le\left\| |x|^{-\frac{b}{\sigma } } u\right\| _{\gamma }^{\sigma } \left\| \nabla u\right\| _{r} \lesssim\left\| |x|^{-\frac{b}{\sigma } } u\right\| _{\dot{H}^{1-\frac{b}{\sigma},r } }^{\sigma } \left\| u\right\| _{\dot{H}_{r}^{1} }\lesssim\left\| u\right\| _{\dot{H}^{1, r}}^{\sigma +1}.
\end{equation}
In view of \eqref{GrindEQ__3_4_}--\eqref{GrindEQ__3_6_}, we immediately have \eqref{GrindEQ__3_1_}.
Similarly we also have
\begin{equation}\nonumber
\left\| |x|^{-b} |u|^{\sigma } v\right\| _{L^{r'} } \lesssim\left\| |x|^{-\frac{b}{\sigma } } u\right\| _{\gamma }^{\sigma } \left\| v\right\| _{r} \le \left\| |x|^{-\frac{b}{\sigma } } u\right\| _{\dot{H}^{1-\frac{b}{\sigma},r} }^{\sigma } \left\| v\right\| _{r}
\lesssim\left\| u\right\| _{\dot{H}^{1, r}}^{\sigma }\left\| v\right\| _{r},
\end{equation}
this concludes the proof.
\end{proof}

\begin{proof}[\textbf{Proof of Theorem 1.1}]

We can easily see that $(\gamma(r),r)$ is admissible, where $r$ is given in (1.8). Furthermore, using Remark \ref{rem 2.2.}, we can easily verify that
$\dot{H}^{1,r}_{c}$ is equivalent to $\dot{H}^{1,r}$ provided that $c>-\frac{(d+2-2b)^{2}-4}{(d+2-2b)^2}c(d)$. Putting $\bar{r}=\frac{2d}{d-2}$, we can also see that $\dot{H}^{1,\bar{r}'}_{c}\sim\dot{H}^{1,\bar{r}'}$.
Noticing
\begin{equation}\label{GrindEQ__3_7_}
\frac{1}{\gamma \left(\bar{r}\right)'}=\frac{\sigma+1}{\gamma (r)},
\end{equation}
and using Lemma \ref{lem 3.2.}, H\"{o}lder inequality, we immediately have
\begin{equation}\ \label{GrindEQ__3_8_}
\left\| |x|^{-b} |u|^{\sigma } u\right\|
_{L^{\gamma \left(\bar{r}\right)^{'} } (I,\;\dot{H}^{1,\bar{r}'})}\lesssim\left\| u\right\| _{L^{\gamma (r)} (I,\;\dot{H}^{1,r})}^{\sigma +1},
\end{equation}
where $I\subset\mathbb R$ is an interval. Using \eqref{GrindEQ__3_7_}, Lemma \ref{lem 3.2.} and H\"{o}lder inequality, we also have
\begin{equation}\label{GrindEQ__3_9_}
\left\| |x|^{-b} |u|^{\sigma } u\right\|
_{L^{\gamma \left(\bar{r}\right)^{'} } (I,\;L^{\bar{r}'})}\lesssim\left\| u\right\| _{L^{\gamma (r)} (I,\;\dot{H}^{1,r})}^{\sigma}\left\| u\right\| _{L^{\gamma (r)} (I,\;L^{r})}.
\end{equation}
In view of \eqref{GrindEQ__3_8_} and \eqref{GrindEQ__3_9_}, we have
\begin{equation}\label{GrindEQ__3_10_}
\left\| |x|^{-b} |u|^{\sigma } u\right\|
_{L^{\gamma \left(\bar{r}\right)^{'} } (I,\;H^{1,\bar{r}'})}\lesssim\left\| u\right\| _{L^{\gamma (r)} (I,\;\dot{H}^{1,r})}^{\sigma }\left\| u\right\| _{L^{\gamma (r)} \left(I,\;H^{1,r} \right)}.
\end{equation}
On the other hand, noticing that
$$
\left||x|^{-b} |u|^{\sigma }
u-|x|^{-b} \left|v\right|^{\sigma } v\right|\lesssim |x|^{-b}(|u|^{\sigma}+|v|^{\sigma})|u-v|,
$$
using  Lemma \ref{lem 3.2.}, we have
\begin{eqnarray}\begin{split} \label{GrindEQ__3_11_}
\left\| |x|^{-b} |u|^{\sigma }
u-|x|^{-b} \left|v\right|^{\sigma } v\right\| _{L^{\bar{r}'} } &\lesssim \left\| |x|^{-b} \left(|u|^{\sigma }
+\left|v\right|^{\sigma } \right)\left(u-v\right)\right\| _{L^{\bar{r}'} }
\\
&\lesssim\left(\left\| u\right\| _{\dot{H}^{1,r}}^{\sigma } +\left\| v\right\| _{\dot{H}^{1,r}}^{\sigma } \right)\left\|u-v\right\| _{L^{r} } .
\end{split}\end{eqnarray}
Using \eqref{GrindEQ__3_7_}, \eqref{GrindEQ__3_11_} and H\"{o}lder inequality, we immediately have
\begin{eqnarray}\begin{split} \label{GrindEQ__3_12_}
&\left\| |x|^{-b} |u|^{\sigma } u-|x|^{-b} \left|v\right|^{\sigma }v\right\|_{L^{\gamma \left(\bar{r}\right)^{'} } (I,\;L^{\bar{r}'})}\\
&~~~~~~~~~~~~~~~~~~~\lesssim \left(\left\| u\right\| _{L^{\gamma (r)} (I,\;\dot{H}^{1,r})}^{\sigma } +\left\| v\right\| _{L^{\gamma (r)} (I,\;\dot{H}^{1,r})}^{\sigma } \right)
\left\|u-v\right\| _{L^{\gamma (r)} (I,\;L^{r})} .
\end{split}\end{eqnarray}

First, we prove the local well-posedness. Let $T>0$ and $M>0$ which will be chosen later. We define the following complete metric space
\begin{equation} \nonumber
D=\left\{u\in L^{\gamma (r)} \left(I,\;H^{1,r} \right):\;\left\| u\right\| _{L^{\gamma (r)} (I,\;H^{1,r})} \le M\right\},
\end{equation}
which is equipped with the metric
\begin{equation} \nonumber
d\left(u,\;v\right)=\left\| u-v\right\| _{L^{\gamma
\left(r\right)} (I,\;L^{r})},
\end{equation}
where  $I=[-T,\;T]$.
We consider the mapping
\begin{equation} \nonumber
T:\;u(t)\to e^{-itP_{c}}u_{0}+\int _{0}^{t}e^{-i(t-\tau)P_{c}}|x|^{-b} \left|u(\tau )\right|^{\sigma } u(\tau )d\tau =: u_{L} +u_{NL}.
\end{equation}
Lemma \ref{lem 2.4.} (Strichartz estimates) yields that
\begin{equation} \label{GrindEQ__3_13_}
\left\| u_{L} \right\| _{L^{\gamma (r)} \left(I,\;H^{1,r} \right)}\sim \left\| u_{L} \right\| _{L^{\gamma (r)} (I,\;H^{1,r}_{c})} \lesssim\left\| u_{0} \right\|_{H^{1}_{c}}\sim\left\| u_{0} \right\|_{H^{1}},
\end{equation}
\begin{equation} \label{GrindEQ__3_14_}
\left\| u_{NL} \right\| _{L^{\gamma (r)} \left(I,\;H^{1,r} \right)}\sim\left\| u_{NL} \right\| _{L^{\gamma (r)} (I,\;H^{1,r}_{c})} \lesssim\left\| |x|^{-b} |u|^{\sigma } u\right\|
_{L^{\gamma \left(\bar{r}\right)^{'} } (I,\;H^{1,\bar{r}'}_{c})},
\end{equation}
\begin{equation} \label{GrindEQ__3_15_}
\left\| Tu-Tv \right\| _{L^{\gamma (r)} (I,\;L^{r})} \lesssim\left\| |x|^{-b} |u|^{\sigma } u-|x|^{-b} \left|v\right|^{\sigma }v\right\|_{L^{\gamma \left(\bar{r}\right)^{'} } \left(I,\;L^{\bar{r}',2} \right)}.
\end{equation}
In view of \eqref{GrindEQ__3_13_}, we can see that $\left\|
u_{L}\right\| _{L^{\gamma (r)} \left(\left[-T,\;T\right], \;H^{1,r}\right)} \to 0$, as $T\to 0$. Take $M>0$ such that $CM^{\sigma } \le
\frac{1}{4} $ and $T>0$ such that
\begin{equation} \label{GrindEQ__3_16_}
\left\| u_{L} \right\| _{L^{\gamma (r)}
\left(\left[-T,\;T\right],\;H^{1,r} \right)} \le \frac{M}{2} .
\end{equation}
Using \eqref{GrindEQ__3_10_}, \eqref{GrindEQ__3_14_}, \eqref{GrindEQ__3_16_}, and the fact $\dot{H}^{1,\bar{r}'}_{c}\sim\dot{H}^{1,\bar{r}'}$, we have
\begin{equation} \label{GrindEQ__3_17_}
\left\| Tu\right\| _{L^{\gamma (r)} \left(I,\;H^{1,r}
\right)}\le \frac{M}{2}+C\left\| u\right\| _{L^{\gamma (r)} \left(I,\;H^{1,r} \right)}^{\sigma+1}\le M.
\end{equation}
In view of \eqref{GrindEQ__3_12_} and \eqref{GrindEQ__3_15_}, we have
\begin{equation} \label{GrindEQ__3_18_}
\left\| Tu-Tv\right\| _{L^{\gamma (r)} \left(I,\;L^{r}
\right)}\le 2CM^{\sigma } \left\| u-v\right\| _{L^{\gamma
\left(r\right)} (I,\;L^{r})} \le \frac{1}{2} \left\|
u-v\right\| _{L^{\gamma (r)} (I,\;L^{r})} .
\end{equation}
\eqref{GrindEQ__3_17_} and \eqref{GrindEQ__3_18_} imply that $T: (D,d)\to (D,d)$ is a contraction mapping. From Banach fixed point theorem, there exists a unique solution $u$ of \eqref{GrindEQ__1_1_} in
$(D,d)$. Furthermore for any admissible pair $\left(\gamma \left(p\right),\;p\right)$, it follows from Lemma \ref{lem 2.4.} (Strichartz estimates) and \eqref{GrindEQ__3_10_} that
\begin{equation} \nonumber
\left\| u\right\| _{L^{\gamma \left(p\right)} \left(I,\;H^{1,p} \right)}
\lesssim \left\| u_{0} \right\| _{H^{1}} +\left\| u\right\| _{L^{\gamma
\left(r\right)} \left(I,\;H^{1,r} \right)}^{\sigma +1},
\end{equation}
which implies $u\in L^{\gamma \left(p\right)} \left(I,\;H^{1, p}
\right)$. This completes the proof of the local well-posedness.\\
Next we prove the global well-posedness with small initial data.
We define the following complete metric space
\begin{equation} \nonumber
E=\left\{u\in L^{\gamma (r)} \left(\mathbb R,,\;H^{1,r} \right):\;\left\| u\right\| _{L^{\gamma (r)} \left(\mathbb R,,\;\dot{H}^{1,r}\right)} \le m,\;\left\| u\right\| _{L^{\gamma (r)} (\mathbb R,\;H^{1,r})} \le M\right\},
\end{equation}
which is equipped with the metric
\begin{equation} \nonumber
d\left(u,\;v\right)=\left\| u-v\right\| _{L^{\gamma
\left(r\right)} (\mathbb R,\;L^{r})}.
\end{equation}
Using Lemma \ref{lem 2.4.} (Strichartz estimates) and \eqref{GrindEQ__3_8_}, it follows from the facts $\dot{H}^{1,r}_{c}\sim\dot{H}^{1,r}$ and $\dot{H}^{1,\bar{r}'}_{c}\sim\dot{H}^{1,\bar{r}'}$ that
\begin{equation} \label{GrindEQ__3_19_}
\left\| Tu\right\| _{L^{\gamma (r)} \left(\mathbb R,,\;\dot{H}^{1,r}
\right)} \le C\left\| u_{0} \right\| _{\dot{H}^{1}} +C\left\| u\right\| _{L^{\gamma (r)}
(\mathbb R,\;\dot{H}^{1,r})}^{\sigma +1} .
\end{equation}
Similarly, using Lemma \ref{lem 2.4.} (Strichartz estimates), \eqref{GrindEQ__3_10_} and \eqref{GrindEQ__3_12_}, we also have
\begin{equation} \label{GrindEQ__3_20_}
\left\| Tu\right\| _{L^{\gamma (r)} \left(\mathbb R,,\;H^{1,r}
\right)} \le C\left\| u_{0} \right\| _{H^{1}} +C\left\| u\right\| _{L^{\gamma (r)}
(\mathbb R,\;\dot{H}^{1,r})}^{\sigma }\left\| u\right\| _{L^{\gamma (r)}
\left(\mathbb R,,\;H^{1,r} \right)},
\end{equation}
\begin{equation} \label{GrindEQ__3_21_}
\left\| Tu-Tv\right\| _{L^{\gamma (r)} \left(\mathbb R,,\;L^{r}
\right)}\le C\left(\left\| u\right\| _{L^{\gamma (r)}
(\mathbb R,\;\dot{H}^{1,r})}^{\sigma } +\left\| v\right\| _{L^{\gamma
\left(r\right)} (\mathbb R,\;\dot{H}^{1,r})}^{\sigma } \right)
\left\|u-v\right\| _{L^{\gamma (r)} \left(\mathbb R,,\;L^{r} \right)} .
\end{equation}
Put $m=2C\left\| u_{0} \right\| _{\dot{H}^{1}} $, $M=2C\left\| u_{0} \right\| _{H^{1}} $ and $\delta
=2\left(4C\right)^{-\frac{\sigma +1}{\sigma } } $. If $\left\| u_{0} \right\|
_{\dot{H}^{1}} \le \delta $, i.e. $Cm^{\sigma } <\frac{1}{4} $, then it follows from \eqref{GrindEQ__3_19_}--\eqref{GrindEQ__3_21_} that
\begin{equation} \nonumber
\left\| Tu\right\| _{L^{\gamma (r)} \left(\mathbb R,,\;\dot{H}^{1,r}
\right)}\le m,
\end{equation}
\begin{equation} \nonumber
\;\left\| Tu\right\| _{L^{\gamma (r)} \left(\mathbb R,,\;H^{1,r}
\right)}\le M,
\end{equation}
\begin{equation} \nonumber
\left\| Tu-Tv\right\| _{L^{\gamma (r)} \left(\mathbb R,,\;L^{r}
\right)} \le \frac{1}{2} \left\| u-v\right\|
_{L^{\gamma (r)} \left(\mathbb R,,\;L^{r} \right)} .
\end{equation}
So $T: (E,d)\to (E,d)$ is a contraction mapping and
there exists a unique solution $u$ in $E$.
The scattering result with small initial data can be proved using the standard argument and we omit the details. This concludes the proof.
\end{proof}

\section{Blowup}
In this section, we prove Theorem 1.4. To arrive at this goal, we derive the sharp Hardy-Sobolev inequality as well as the standard virial identity and localized virial estimate related to the focusing, energy-critical INLS$_{c}$ equation.

\subsection{Sharp Hardy-Sobolev inequality}

In this subsection, we consider the sharp Hardy-Sobolev inequality related to the focusing, energy-critical INLS$_{c}$ equation:
\begin{equation}\label{GrindEQ__4_1_}
\left(\int_{\mathbb R^{d}}{|x|^{-b}\left|f\right|^{\sigma^{\star}+2}dx}\right)^{\frac{1}{\sigma^{\star}+2}}\le C_{HS}(b,c) \left\|f\right\|_{\dot{H}^{1}_{c}},
\end{equation}
where the sharp constant $C_{HS}(b,c)$ is defined by
\begin{equation}\label{GrindEQ__4_2_}
C_{HS}(b,c)=\inf_{f\in \dot{H}^{1}_{c}\setminus\left\{0\right\}}
{\frac{\left\|f\right\|_{\dot{H}^{1}_{c}}}
{\left(\int{|x|^{-b}\left|f\right|^{\sigma^{\star}+2}dx}\right)
^{\frac{1}{\sigma^{\star}+2}}}}.
\end{equation}
\begin{lem}[Sharp Hardy-Sobolev inequality]\label{lem 4.1.}
Let $d\ge 3$, $0<b<2$ and $c>-c(d)$.
\begin{enumerate}
  \item If $-c(d)<c\le0$, then the equality in \eqref{GrindEQ__4_1_} is attained by function $W_{b,c}(x)$ given in \eqref{GrindEQ__1_10_}.
  \item If $c>0$, then $C_{HS}(b,c)\le C_{HS}(b,0)$.
\end{enumerate}
\end{lem}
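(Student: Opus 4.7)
The plan is to reduce part (1) to the classical sharp Hardy-Sobolev inequality (the case $c=0$, whose extremizer $W_{b,0}$ is a standard Aubin-Talenti-type bubble) via an explicit two-step change of unknown/variables, and to deduce part (2) by a monotonicity comparison of the quadratic forms.

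For part (1), assume $-c(d)<c\le 0$, so that $\rho\ge 0$ and $\beta:=1-2\rho/(d-2)\le 1$. The first step is the \emph{ground-state substitution} $f(x)=|x|^{-\rho}g(x)$. Using the identity $\rho(d-2-\rho)+c=0$ satisfied by $\rho$ (equivalently, $P_{c}|x|^{-\rho}=0$ away from the origin) together with integration by parts, one obtains
\[
\|f\|_{\dot H^1_c}^{2} = \int|\nabla f|^2+c|x|^{-2}|f|^2\,dx = \int|x|^{-2\rho}|\nabla g|^2\,dx.
\]
The second step is the change of variables $y=x|x|^{\beta-1}$ with $h(y)=g(x)$, whose Jacobian is $dy=\beta|x|^{d(\beta-1)}\,dx$. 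Passing to polar coordinates yields
\[
\int|x|^{-2\rho}|\nabla_x g|^2\,dx = \beta\int|\nabla_y h|^2\,dy + (\beta^{-1}-\beta)\int|y|^{-2}|\nabla_{\omega}h|^2\,dy,
\]
the value of $\beta$ being forced by the requirement that the radial $|y|$-weight vanish. Because $\beta\le 1$, the angular remainder is nonnegative, so $\|f\|_{\dot H^1_c}^{2}\ge\beta\|h\|_{\dot H^1}^{2}$ with equality iff $h$ is radial. A parallel computation, using $\sigma^\star+2=2(d-b)/(d-2)$, shows that
\[
\int|x|^{-b}|f|^{\sigma^\star+2}\,dx = \beta^{-1}\int|y|^{-b}|h|^{\sigma^\star+2}\,dy.
\]
Invoking the classical sharp Hardy-Sobolev inequality for $h$ in $\dot H^1(\mathbb R^d)$ and combining the three identities gives the sharp inequality for $f$ in $\dot H^1_c$. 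Equality forces $h$ to be radial and to coincide (up to scaling) with $W_{b,0}$, so undoing both substitutions gives $f(x)=|x|^{-\rho}W_{b,0}(x|x|^{\beta-1})$, which after relabeling the scaling parameter $\varepsilon$ equals $W_{b,c}(x)$ as stated.

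For part (2), i.e. $c>0$, the transformation above breaks down because $\beta>1$ reverses the sign of the angular remainder. Instead I would use the trivial pointwise inequality
\[
\|f\|_{\dot H^1_c}^{2} = \|f\|_{\dot H^1}^{2} + c\int|x|^{-2}|f|^2\,dx \ge \|f\|_{\dot H^1}^{2},
\]
which holds since $c>0$. Applied in the extremal characterization of the sharp constant in the Hardy-Sobolev inequality, this yields the comparison $C_{HS}(b,c)\le C_{HS}(b,0)$.

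The main technical work is the algebraic bookkeeping in the two-step transformation of part (1): one must verify that the single parameter $\beta=1-2\rho/(d-2)$ simultaneously (i) eliminates the radial weight on the kinetic side after the ground-state transform and integration by parts, (ii) produces the clean $|y|^{-b}$ weight on the Hardy-Sobolev side, and (iii) carries $W_{b,0}$ back to the stated $W_{b,c}$ (the extra factor $\beta^{2}$ appearing in the numerator of $W_{b,c}$ is absorbed by the induced rescaling of $\varepsilon$). Once these identities are in place, the conclusion is immediate from the known classical sharp Hardy-Sobolev inequality together with a routine equality-case analysis.
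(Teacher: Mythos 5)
Your Item 2 is exactly the paper's argument: for $c>0$ one has $\left\|f\right\|_{\dot H^1}\le\left\|f\right\|_{\dot H^1_c}$, and inserting this into the $c=0$ inequality gives $C_{HS}(b,c)\le C_{HS}(b,0)$. For Item 1 you take a genuinely different route: the paper simply cites Kang--Peng \cite{KP04} for the extremizer, whereas you reduce the case $-c(d)<c\le 0$ to the classical $c=0$ sharp Hardy--Sobolev inequality via the ground-state substitution $f=|x|^{-\rho}g$ (using $\rho(d-2-\rho)+c=0$) followed by the radial change of variables $s=r^{\beta}$, $\beta=1-\frac{2\rho}{d-2}$. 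I checked the bookkeeping: the kinetic identity $\left\|f\right\|_{\dot H^1_c}^2=\beta\left\|h\right\|_{\dot H^1}^2+(\beta^{-1}-\beta)\int|y|^{-2}|\nabla_\omega h|^2\,dy$ and the identity $\int|x|^{-b}|f|^{\sigma^\star+2}dx=\beta^{-1}\int|y|^{-b}|h|^{\sigma^\star+2}dy$ are both correct, and since $0<\beta\le1$ the angular remainder has the favorable sign; together with the known radial extremizer $W_{b,0}$ of the $c=0$ inequality (a classical fact you must cite, playing the same role as the paper's citation of \cite{KP04}) this yields attainment and even the explicit relation $C_{HS}(b,c)=\beta^{-\frac12-\frac{1}{\sigma^\star+2}}C_{HS}(b,0)$, information the paper's citation does not display. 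One small correction: the composed function is $|x|^{-\rho}W_{b,0}(x|x|^{\beta-1})=\beta^{-\frac{d-2}{2-b}}W_{b,c}(x)$, and the factor $\beta^{\frac{d-2}{2-b}}$ coming from the $\beta^{2}$ in \eqref{GrindEQ__1_10_} is a constant multiple of the function, not something absorbable by relabeling $\varepsilon$ (changing $\varepsilon$ moves along the scaling orbit, it does not rescale the amplitude alone). This is harmless for Lemma \ref{lem 4.1.} because both sides of \eqref{GrindEQ__4_1_} are homogeneous of degree one in $f$, so any nonzero multiple of an extremizer attains equality; but the precise $\beta^{2}$-normalization is what makes $W_{b,c}$ solve $P_cW_{b,c}=|x|^{-b}W_{b,c}^{\sigma^\star+1}$, which the paper uses in \eqref{GrindEQ__4_3_}--\eqref{GrindEQ__4_5_}, so if you want that as well you should verify it directly (or fall back on \cite{KP04}, as the paper does).
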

\begin{proof}
The proof of Item 1 can be found in \cite{KP04}. Using the fact $c>0$, we immediately have that $\left\|f\right\|_{\dot{H}^{1}}<\left\|f\right\|_{\dot{H}^{1}_{c}}$ for any $f\in \dot{H}^{1}\setminus\left\{0\right\}$. Hence it follows from Item 1 that
\begin{equation}\nonumber
\left(\int{|x|^{-b}\left|f\right|^{\sigma^{\star}+2}dx}\right)^{\frac{1}{\sigma^{\star}+2}}\le C_{HS}(b,0) \left\|f\right\|_{\dot{H}^{1}}<C_{HS}(b,0) \left\|f\right\|_{\dot{H}^{1}_{c}},
\end{equation}
which implies that $C_{HS}(b,c)\le C_{HS}(b,0)$. This completes the proof.
\end{proof}
\begin{rem}\label{rem 4.2.}
\textnormal{When $b=0$ and $c>0$, it is known that $C_{HS}(b,c)=C_{HS}(b,0)$ and the equality in \eqref{GrindEQ__4_1_} is never attained. In fact, $C_{HS}(0,c)\ge C_{HS}(0,0)$ can be proved by considering $f_{n}(x)=W_{0,c}(x-x_{n})$ for any sequence $x_{n}\rightarrow\infty$. See \cite{D18p,KMVZZ17} for details. But when $b>0$, we could not apply this argument and we don't know whether $C_{HS}(b,c)= C_{HS}(b,0)$.}
\end{rem}

Next, we recall some properties related to $W_{b,c}$. Lemma 2.2 of \cite{KP04} also shows that $W_{b,c}$ with $-c(d)<c\le0$ solves the equation
\begin{equation}\nonumber
P_{c}W_{b,c}=|x|^{-b}\left|W_{b,c}\right|^{\sigma^{\star}}W_{b,c},
\end{equation}
and satisfies
\begin{equation}\label{GrindEQ__4_3_}
\left\|W_{b,c}\right\|_{\dot{H}^{1}_{c}}^{2}=\int{|x|^{-b} W_{b,c}^{\sigma^{\star}+2}dx}.
\end{equation}
Hence, we have for $-c(d)<c\le0$,
\begin{equation}\label{GrindEQ__4_4_}
\left\|W_{b,c}\right\|_{\dot{H}^{1}_{c}}^{2}=\int{|x|^{-b} W_{b,c}^{\sigma^{\star}+2}dx}=C_{HS}(b,c)^{-\frac{2(d-b)}{2-b}},
~\left\|W_{b,c}\right\|_{\dot{H}^{1}_{c}}^{\sigma^{\star}}=C_{HS}(b,c)^{-(\sigma^{\star}+2)},
\end{equation}
\begin{equation} \label{GrindEQ__4_5_}
E_{b,c}\left(W_{b,c}\right)=\frac{1}{2} \left\|
 W_{b,c}\right\|_{\dot{H}^{1}_{c}}^{2}-\frac{1}{\sigma^{\star}+2} \int{|x|^{-b} W_{b,c}^{\sigma^{\star}+2}dx}=\frac{2-b}{2(d-b)}C_{HS}(b,c)^{-\frac{2(d-b)}{2-b}}.
\end{equation}
Moreover, for any $c>-c(d)$, we have
\begin{equation}\label{GrindEQ__4_6_}
C_{HS}(b,c)\le C_{HS}(b,\bar{c})=\left\|W_{b,\bar{c}}\right\|_{\dot{H}^{1}_{\bar{c}}}^{-\frac{2-b}{d-b}}
=\left\||x|^{-b}W_{b,\bar{c}}^{\sigma^{\star}+2}\right\|_{L^{1}}^{-\frac{2-b}{2(d-b)}}
=\left[\frac{2(d-b)}{2-b}E_{b,\bar{c}}\left(W_{b,\bar{c}}\right)\right]^{-\frac{2-b}{2(d-b)}}.
\end{equation}

\subsection{Virial estimates}

In this subsection, we derive the standard virial identity and localized virial estimate related to the focusing INLS$_{c}$ equation. Given a real valued function $a$, we define the virial potential by
\begin{equation}\nonumber
V_{a}(t):=\int{a(x)\left|u(t,x)\right|^{2}dx.}
\end{equation}
A simple computation shows that the following result holds.
\begin{lem}[\cite{D18p}]\label{lem 4.3.}
Let $d\ge3$ and $c>-c(d)$. If $u:I\times\mathbb R^{d}\rightarrow\mathbb C$ is a smooth-in-time and Schwartz-in-space solution to $iu_{t}-P_{c}u=N(u)$, with $N(u)$ satisfying $\textnormal{Im}(N(u)\bar{u})=0$, then we have for any $t\in I$,
\begin{equation}\nonumber
\frac{d}{dt}V_{a}(t)=2\int{\nabla a(x)\cdot \textnormal{Im}(\bar{u}(t,x)\nabla u(t,x))dx},
\end{equation}
and
\begin{eqnarray}\begin{split}\nonumber
\frac{d^{2}}{dt^{2}}V_{a}(t)&=-\int{\Delta^{2}a(x)|u(t,x)|^{2}dx+4\sum_{j,k=1}^{d}
{\int{\partial_{jk}^{2}a(x)\textnormal{Re}(\partial_{k}u(t,x)\partial_{j}\bar{u}(t,x))dx}}}\\
&+4c\int{\nabla a(x)\cdot\frac{x}{|x|^{4}}|u(t,x)|^{2}dx}+2\int{\nabla a(x)\cdot \left\{N(u),u\right\}_{p}(t,x)dx},
\end{split}\end{eqnarray}
where $\left\{f,g\right\}_p:=\textnormal{Re}(f\nabla \bar{g}-g\nabla\bar{f})$ is the momentum bracket.
\end{lem}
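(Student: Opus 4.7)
The plan is to carry out the standard virial computation for a Schrödinger-type equation with a real potential and a mass-conservative nonlinearity, being careful to isolate the contribution of each piece of $P_c = -\Delta + c|x|^{-2}$ and of $N(u)$. The overall strategy relies only on the equation $iu_t - P_cu = N(u)$, the hypothesis $\textnormal{Im}(N(u)\bar u) = 0$, and integration by parts; the Schwartz-in-space assumption guarantees that all boundary terms at infinity vanish and, since $d\ge 3$, the local singularity of $|x|^{-2}$ is harmless when paired with the smooth $|u|^{2}$.

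For the first derivative, I would start from the pointwise identity $\partial_t|u|^{2}=2\,\textnormal{Re}(\bar u\,u_t)$ and use the equation to write $\bar u\,u_t = -i\bar u(P_c u + N(u))$. Since $c|x|^{-2}|u|^{2}$ is real and $\textnormal{Im}(N(u)\bar u)=0$ by hypothesis, only the Laplacian contributes: $\textnormal{Re}(\bar u\,u_t) = -\textnormal{Im}(\bar u\,\Delta u) = -\nabla\cdot\textnormal{Im}(\bar u\,\nabla u)$. Multiplying by $a(x)$ and integrating by parts immediately yields
\[
\frac{d}{dt}V_a(t)=2\int \nabla a\cdot\textnormal{Im}(\bar u\,\nabla u)\,dx.
\]

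For the second derivative, I would differentiate the above expression in $t$ and substitute $\partial_t u = -i(-\Delta u + c|x|^{-2} u + N(u))$ into $\textnormal{Im}(\partial_t\bar u\,\partial_j u + \bar u\,\partial_j\partial_t u)$. The resulting integrand naturally splits into three pieces. \emph{(i) The kinetic piece} produces an expression of the form $\partial_j a\,\textnormal{Re}(\bar u\,\partial_j\Delta u - \partial_j u\,\Delta\bar u)$; integrating by parts twice (once to move a derivative off $\Delta u$, and then once more on the resulting $\Delta(|u|^2)/2$ piece) converts this into the quadratic form $4\sum_{j,k}\partial_{jk}^{2}a\,\textnormal{Re}(\partial_k u\,\partial_j\bar u)$ together with the bi-Laplacian remainder $-\int\Delta^{2}a\,|u|^{2}\,dx$. \emph{(ii) The potential piece} comes from $\nabla a\cdot\textnormal{Im}(\bar u\,\partial_j(c|x|^{-2}u)-c|x|^{-2}u\,\partial_j\bar u)$; the terms with $|x|^{-2}$ itself cancel (they form $\textnormal{Im}$ of a real quantity), leaving $\nabla a\cdot\nabla(c|x|^{-2})|u|^{2}=-2c\,\nabla a\cdot x|x|^{-4}|u|^{2}$, which after the overall factor of $2$ becomes the stated $4c\int\nabla a\cdot x|x|^{-4}|u|^{2}\,dx$. \emph{(iii) The nonlinear piece} reduces directly to $2\int\nabla a\cdot\{N(u),u\}_p\,dx$ by the definition of the momentum bracket and integration by parts.

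The main obstacle is purely bookkeeping: organizing the several integrations by parts in part (i) to recognize the quadratic form $\partial_{jk}^{2}a\,\textnormal{Re}(\partial_j\bar u\,\partial_k u)$ cleanly, and tracking real vs.\ imaginary parts so that exactly the claimed cancellations occur in part (ii). All integrations by parts are justified by the Schwartz-in-space hypothesis on $u$ together with the smoothness of $a$; the $|x|^{-2}$ singularity is integrable against Schwartz functions in dimension $d\ge 3$, so no regularization near the origin is required at this formal level.
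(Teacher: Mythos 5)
Your computation is correct and is exactly the standard virial calculation that the paper relies on: the paper gives no proof of Lemma \ref{lem 4.3.} itself, quoting it from \cite{D18p} with the remark that a simple computation yields it, and your direct differentiation of $V_a$ with the split into kinetic, potential and nonlinear pieces is that computation. One bookkeeping correction in your step (ii): after substituting $\partial_t u=-i\bigl(c|x|^{-2}u+\cdots\bigr)$, the surviving potential term is $-\nabla a\cdot\nabla\bigl(c|x|^{-2}\bigr)|u|^{2}=+2c\,\nabla a\cdot x|x|^{-4}|u|^{2}$, which the overall factor $2$ turns into the stated $+4c\int\nabla a\cdot x|x|^{-4}|u|^{2}\,dx$; with the sign you wrote for the leftover term you would get $-4c$ instead.
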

Note that if $N(u)=-|x|^{-b}|u|^{\sigma}u$, then
\begin{equation}\nonumber
\left\{N(u),u\right\}_{p}=\frac{\sigma}{\sigma+2}\nabla(|x|^{-b}|u|^{\sigma+2})+\frac{2}{\sigma+2}
\nabla(|x|^{-b})|u|^{\sigma+2}.
\end{equation}
Hence, we immediately have the following result.
\begin{cor}\label{cor 4.4.}
If $u$ is a smooth-in-time and Schwartz-in-space solution to the focusing INLS$_{c}$ equation, then we have for any $t\in I$,

\begin{eqnarray}\begin{split}\nonumber
\frac{d^{2}}{dt^{2}}V_{a}(t)&=-\int{\Delta^{2}a(x)|u(t,x)|^{2}dx+4\sum_{j,k=1}^{d}
{\int{\partial_{jk}^{2}a(x)\textnormal{Re}(\partial_{k}u(t,x)\partial_{j}\bar{u}(t,x))dx}}}\\
&+4c\int{\nabla a(x)\cdot\frac{x}{|x|^{4}}|u(t,x)|^{2}dx}-\frac{2\sigma}{\sigma+2}
\int{\Delta a(x)|x|^{-b}|u(t,x)|^{\sigma+2}dx}\\
&+\frac{4}{\sigma+2}\int{\nabla a(x)\cdot \nabla(|x|^{-b})|u(t,x)|^{\sigma+2}dx}.
\end{split}\end{eqnarray}
\end{cor}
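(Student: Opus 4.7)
The plan is to apply Lemma \ref{lem 4.3.} with the specific nonlinearity $N(u)=-|x|^{-b}|u|^{\sigma}u$, and then simplify the momentum-bracket term by an integration by parts. First I would verify the hypothesis of Lemma \ref{lem 4.3.}: since $N(u)\bar u=-|x|^{-b}|u|^{\sigma+2}$ is manifestly real, we have $\textnormal{Im}(N(u)\bar u)=0$, so the lemma applies and gives the general virial identity with the term $2\int \nabla a(x)\cdot \{N(u),u\}_p(t,x)\,dx$ on the right.

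Next I would establish the momentum bracket identity $\{N(u),u\}_p=\frac{\sigma}{\sigma+2}\nabla(|x|^{-b}|u|^{\sigma+2})+\frac{2}{\sigma+2}\nabla(|x|^{-b})|u|^{\sigma+2}$ which is asserted just before the corollary. By definition $\{N(u),u\}_p=\textnormal{Re}(N(u)\nabla\bar u-u\nabla \overline{N(u)})$. Writing out $u\nabla\overline{N(u)}=-u\nabla(|x|^{-b}|u|^{\sigma}\bar u)$ and expanding by the product rule, the cross term $-|x|^{-b}|u|^{\sigma}u\nabla\bar u$ cancels part of the derivative of $|u|^{\sigma}\bar u$, leaving
\[
N(u)\nabla\bar u-u\nabla\overline{N(u)}=\nabla(|x|^{-b})|u|^{\sigma+2}+\tfrac{\sigma}{2}|x|^{-b}|u|^{\sigma}\bigl(\bar u\nabla u+u\nabla\bar u\bigr).
\]
Using $\bar u\nabla u+u\nabla\bar u=\nabla(|u|^2)$ and $\nabla(|u|^{\sigma+2})=\frac{\sigma+2}{2}|u|^{\sigma}\nabla(|u|^2)$, the second term becomes $\frac{\sigma}{\sigma+2}|x|^{-b}\nabla(|u|^{\sigma+2})$. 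Combining with $\nabla(|x|^{-b})|u|^{\sigma+2}=\nabla(|x|^{-b}|u|^{\sigma+2})-|x|^{-b}\nabla(|u|^{\sigma+2})$ and rearranging reproduces the stated identity; note that the resulting expression is already real, so the $\textnormal{Re}$ is automatic.

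Finally I would substitute this identity into the $\nabla a\cdot\{N(u),u\}_p$ term and integrate by parts on the piece containing $\nabla(|x|^{-b}|u|^{\sigma+2})$, using that $u$ is Schwartz in $x$ so no boundary terms arise:
\[
\frac{2\sigma}{\sigma+2}\int \nabla a\cdot\nabla(|x|^{-b}|u|^{\sigma+2})\,dx=-\frac{2\sigma}{\sigma+2}\int \Delta a\,|x|^{-b}|u|^{\sigma+2}\,dx.
\]
The remaining piece $\frac{4}{\sigma+2}\int \nabla a\cdot\nabla(|x|^{-b})|u|^{\sigma+2}\,dx$ is left as it is, and together with the three terms supplied directly by Lemma \ref{lem 4.3.} (the bi-Laplacian term, the Hessian term, and the inverse-square potential term) this yields exactly the identity claimed in the corollary.

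The only genuinely delicate step is the bracket computation in the second paragraph, where one must keep careful track of complex conjugates and verify that the terms involving $\nabla\bar u$ versus $\nabla u$ combine into the real gradient $\nabla(|u|^2)$. Everything else is a direct substitution into the identity provided by Lemma \ref{lem 4.3.} and a single integration by parts justified by the Schwartz-in-space assumption.
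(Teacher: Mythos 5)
Your proposal is correct and follows essentially the same route as the paper: apply Lemma \ref{lem 4.3.} with $N(u)=-|x|^{-b}|u|^{\sigma}u$, use the momentum-bracket identity $\{N(u),u\}_{p}=\frac{\sigma}{\sigma+2}\nabla(|x|^{-b}|u|^{\sigma+2})+\frac{2}{\sigma+2}\nabla(|x|^{-b})|u|^{\sigma+2}$ (stated without proof in the paper, and which you verify correctly), and integrate the $\nabla(|x|^{-b}|u|^{\sigma+2})$ piece by parts to produce the $\Delta a$ term. Your write-up simply fills in the algebra the paper leaves implicit, so there is nothing to correct.
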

We have the following standard virial identity for the focusing INLS$_{c}$ equation.
\begin{lem}[Standard Virial Identity]\label{lem 4.5.}
Let $d\ge 3$, $0<b<2$ and $c>-c(d)$. Let $u_{0}\in H^{1}$ be such that $|x|u_{0}\in L^2$ and $u:I\times\mathbb R^{d}\to \mathbb C$ be the corresponding solution to the focusing INLS$_{c}$ equation. Then, $|x|u\in C\left(I,\;L^2\right)$. Moreover, for any $t\in I$,
\begin{equation}\label{GrindEQ__4_7_}
\frac{d^2}{dt^2}\left\|xu(t)\right\|_{L^{2}}^{2}=8 \left\|u(t)\right\| _{\dot{H}^{1}_{c}}^{2} -\frac{4(d\sigma+2b)}{\sigma+2} \int{|x|^{-b} \left|u(t,x)\right|^{\sigma +2}dx.}
\end{equation}
\end{lem}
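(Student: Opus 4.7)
The plan is to apply Corollary \ref{cor 4.4.} with the choice $a(x)=|x|^{2}$ and then justify the formal computation by a truncation-plus-regularization argument. Direct differentiation yields
$$\nabla a(x)=2x,\quad \partial_{jk}^{2}a(x)=2\delta_{jk},\quad \Delta a(x)=2d,\quad \Delta^{2}a(x)=0,$$
together with $\nabla a(x)\cdot\frac{x}{|x|^{4}}=\frac{2}{|x|^{2}}$ and $\nabla a(x)\cdot\nabla(|x|^{-b})=-2b|x|^{-b}$. Substituting into the identity of Corollary \ref{cor 4.4.} and recombining the kinetic and inverse-square terms via $\left\|u\right\|_{\dot H^{1}_{c}}^{2}=\left\|\nabla u\right\|_{L^{2}}^{2}+c\int|x|^{-2}|u|^{2}dx$, I get the coefficient $\frac{4d\sigma}{\sigma+2}+\frac{8b}{\sigma+2}=\frac{4(d\sigma+2b)}{\sigma+2}$ in front of $\int|x|^{-b}|u|^{\sigma+2}dx$, which is exactly \eqref{GrindEQ__4_7_}.

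Since $a(x)=|x|^{2}$ is unbounded, Corollary \ref{cor 4.4.} does not apply directly to a generic $H^{1}$ solution. To fix this, I would introduce a cutoff $\chi\in C_{0}^{\infty}(\mathbb R)$ with $\chi\equiv 1$ on $[-1,1]$ and $\chi\equiv 0$ outside $[-2,2]$, and set $a_{R}(x):=|x|^{2}\chi(|x|/R)$. Then $a_{R}$ is smooth and compactly supported with uniformly bounded derivatives (with explicit decay in $R$ for the biharmonic part), and $a_{R}\nearrow|x|^{2}$, $\nabla a_{R}\to 2x$, $\partial_{jk}^{2}a_{R}\to 2\delta_{jk}$, $\Delta^{2}a_{R}\to 0$ pointwise as $R\to\infty$.

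For smooth-in-time, Schwartz-in-space approximate solutions (obtained by approximating $u_{0}$ by a sequence $u_{0}^{(n)}\in\mathcal{S}(\mathbb R^{d})$ with $u_{0}^{(n)}\to u_{0}$ in $H^{1}$ and $|x|u_{0}^{(n)}\to|x|u_{0}$ in $L^{2}$, and using Theorem \ref{thm 1.1.} together with standard persistence of regularity), Corollary \ref{cor 4.4.} applies to each $a_{R}$. Letting $R\to\infty$, the biharmonic contribution vanishes, the Hessian term converges to $8\|\nabla u\|_{L^{2}}^{2}$ by dominated convergence, the inverse-square term yields $8c\||x|^{-1}u\|_{L^{2}}^{2}$ (finite for $u\in H^{1}$ by \eqref{GrindEQ__1_2_}), and the two nonlinear terms combine to produce the right-hand side of \eqref{GrindEQ__4_7_}, with the integral $\int|x|^{-b}|u|^{\sigma+2}dx$ being finite thanks to the Hardy-Sobolev inequality of Lemma \ref{lem 4.1.}.

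The final step, and the main obstacle, is upgrading to a general $u_{0}\in H^{1}$ with $|x|u_{0}\in L^{2}$ and establishing the regularity $|x|u\in C(I,L^{2})$. For this, I would use the first-derivative identity
$$\frac{d}{dt}V_{a_{R}}(t)=2\int\nabla a_{R}(x)\cdot\textnormal{Im}(\bar u(t,x)\nabla u(t,x))\,dx,$$
which, combined with $|\nabla a_{R}(x)|\lesssim|x|$ and Cauchy--Schwarz, gives $|\frac{d}{dt}V_{a_{R}}(t)|\lesssim V_{a_{R}}(t)^{1/2}\|\nabla u(t)\|_{L^{2}}$ uniformly in $R$. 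A Gronwall-type argument then bounds $\|xu^{(n)}(t)\|_{L^{2}}$ uniformly on $I$ in terms of $\|xu_{0}^{(n)}\|_{L^{2}}$ and $\sup_{I}\|\nabla u^{(n)}\|_{L^{2}}$; continuous dependence from Theorem \ref{thm 1.1.} gives $u^{(n)}\to u$ in $C(I,H^{1})$, and Fatou plus the above bound yields $|x|u\in C(I,L^{2})$. Passing to the limit in the identity \eqref{GrindEQ__4_7_} already established for each $u^{(n)}$ then completes the proof.
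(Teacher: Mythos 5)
Your proposal is correct and follows essentially the same route as the paper: the identity \eqref{GrindEQ__4_7_} is obtained by applying Corollary \ref{cor 4.4.} with $a(x)=|x|^{2}$ (your coefficient bookkeeping, giving $\frac{4(d\sigma+2b)}{\sigma+2}$, matches), while the claim $|x|u\in C(I,L^{2})$ and the justification of the unbounded weight are handled by the standard truncation/approximation argument, which the paper simply delegates to Proposition 6.5.1 of \cite{C03} and you spell out explicitly.
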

\begin{proof}
The first claim follows from the standard approximation argument and we omit the details (see e.g. Proposition 6.5.1 of \cite{C03} for details). It remains to prove \eqref{GrindEQ__4_7_}. Applying Corollary \ref{cor 4.4.} with $a(x)=|x|^{2}$, we have
\begin{eqnarray}\begin{split}\nonumber
\frac{d^2}{dt^2}\left\|x u(t)\right\|_{L^{2}}^{2}&=\frac{d^2}{dt^2}V_{|x|^{2}}(t)\\
&=8\int{\left|\nabla u(t,x)\right|^{2}+c|x|^{-2}\left|u(t,x)\right|^{2}dx}
-\frac{4(d\sigma+2b)}{\sigma+2} \int{|x|^{-b} \left|u(t,x)\right|^{\sigma +2}dx}\\
&=8\left\|u(t)\right\| _{\dot{H}^{1}_{c}}^{2} -\frac{4(d\sigma+2b)}{\sigma+2} \int{|x|^{-b} \left|u(t,x)\right|^{\sigma +2}dx,}
\end{split}\end{eqnarray}
this completes the proof.
\end{proof}
Next we derive the localized virial estimate which is used to prove the blowup for the focusing INLS$_{c}$ equation with radial data. To do so, we introduce a function $\theta:\left[0,\;\infty\right)\to \left[0,\;\infty\right)$ satisfying
\begin{equation}\label{GrindEQ__4_8_}
\theta (r)=\left\{\begin{array}{l} {r^2,~\textrm{if}\;0\le r\le 1,}
\\ {\textrm{const},~\textrm{if}\;r\ge 2,} \end{array}\right.
\textrm{and}~~\theta''(r)\le 2,~~\textrm{for}~~r\ge 0.
\end{equation}
For $R>1$, we define the radial function
\begin{equation}\label{GrindEQ__4_9_}
\varphi_{R}(x)=\varphi_{R}(r):=R^{2}\theta(r/R),\;r=|x|.
\end{equation}
One can easily see that
\begin{equation}\label{GrindEQ__4_10_}
2-\varphi''_{R}(r)\ge0,\;2-\frac{\varphi'_{R}(r)}{r}\ge0,\;2d-\Delta\varphi_{R}(x)\ge0.
\end{equation}
\begin{lem}[Localized Virial Estimate]\label{lem 4.6.}
Let $d \ge 3$, $0<b<2$, $c>-c(d)$, $R>1$ and $\varphi_{R}$ be as in \eqref{GrindEQ__4_9_}. Let $u:I\times\mathbb R^{d}\to \mathbb C$ be a radial solution to the focusing INLS$_{c}$ equation. Then for any $\varepsilon>0$ and any $t\in I$,
\begin{eqnarray}\begin{split} \label{GrindEQ__4_11_}
\frac{d^2}{dt^2}V_{\varphi_{R}}(t)&\le 8 \left\|u(t)\right\| _{\dot{H}^{1}_{c}}^{2} -\frac{4(d\sigma+2b)
}{\sigma+2} \int{|x|^{-b} \left|u(t,x)\right|^{\sigma +2}dx}\\
&+O\left(R^{-2}+\varepsilon^{-\frac{\sigma}{4-\sigma}}R^{-{\frac{2\left[(d-1)\sigma+2b\right]}
{4-\sigma}}}+\varepsilon\left\|u(t)\right\| _{\dot{H}^{1}_{c}}^{2}\right).
\end{split}\end{eqnarray}
\end{lem}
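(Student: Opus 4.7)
My plan is to apply Corollary \ref{cor 4.4.} with $a = \varphi_R$ and then compare, term by term, with the standard virial identity of Lemma \ref{lem 4.5.}, which corresponds to $a(x) = |x|^2$. Since $\varphi_R(x) = |x|^2$ on the ball $\{|x|\le R\}$, every discrepancy with the standard identity is supported in the exterior $\{|x|>R\}$, where the radial hypothesis will be exploited. Concretely, I would write
\begin{equation*}
\frac{d^2}{dt^2}V_{\varphi_R}(t) = 8\left\|u(t)\right\|_{\dot{H}^{1}_{c}}^{2} - \frac{4(d\sigma + 2b)}{\sigma+2}\int |x|^{-b}|u(t,x)|^{\sigma+2}dx + \mathcal{E}_{\mathrm{lin}} + \mathcal{E}_{\mathrm{nlin}},
\end{equation*}
where $\mathcal{E}_{\mathrm{lin}}$ gathers the contributions of the bilaplacian, the Hessian, and the inverse-square piece $4c\nabla\varphi_R\cdot x/|x|^4$, and $\mathcal{E}_{\mathrm{nlin}}$ gathers the two nonlinear pieces.

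For $\mathcal{E}_{\mathrm{lin}}$: the radiality of $u$ reduces the Hessian sum via the pointwise identity $\sum_{j,k}\partial^2_{jk}\varphi_R\,\textnormal{Re}(\partial_k u\,\partial_j\bar u) = \varphi_R''(r)|\nabla u|^2$, and $\varphi_R''\le 2$ from \eqref{GrindEQ__4_8_} then gives $4\int\varphi_R''(r)|\nabla u|^2 dx \le 8\int|\nabla u|^2 dx$, the negative remainder being harmlessly discarded. The bilaplacian is supported in $\{R\le|x|\le 2R\}$ with $|\Delta^2\varphi_R|\lesssim R^{-2}$ by scaling, so yields $O(R^{-2}\|u_0\|^2_{L^2}) = O(R^{-2})$ after invoking mass conservation. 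Finally, $\nabla\varphi_R\cdot x/|x|^4 = \varphi_R'(r)/r^3$, and the bound $|\varphi_R'(r)|\lesssim \min\{r,R\}$ implies $|\varphi_R'(r)/r^3 - 2/r^2|\lesssim r^{-2}$ on $\{|x|>R\}$; combined with $|x|^{-2}\le R^{-2}$ there and mass conservation, this gives another $O(R^{-2})$ contribution.

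For $\mathcal{E}_{\mathrm{nlin}}$: the factors $2d - \Delta\varphi_R$ and $(2r - \varphi_R'(r))/r$ are bounded and supported in $\{|x|>R\}$, so $|\mathcal{E}_{\mathrm{nlin}}|\lesssim \int_{|x|>R}|x|^{-b}|u(t,x)|^{\sigma+2}dx$. The decisive step, where the radial hypothesis enters in an essential way, is the Strauss radial Sobolev embedding
\begin{equation*}
|u(x)| \lesssim |x|^{-\frac{d-1}{2}}\|u\|_{L^2}^{1/2}\|\nabla u\|_{L^2}^{1/2},\quad x\ne 0,
\end{equation*}
which, together with $|x|^{-b}\le R^{-b}$ on the exterior and mass conservation, yields $\int_{|x|>R}|x|^{-b}|u|^{\sigma+2}dx \lesssim R^{-b-\sigma(d-1)/2}\|\nabla u\|_{L^2}^{\sigma/2}$. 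Young's inequality with conjugate exponents $4/\sigma$ and $4/(4-\sigma)$ then splits this as $\varepsilon\|\nabla u\|^2_{L^2} + C_{\varepsilon}R^{-2((d-1)\sigma+2b)/(4-\sigma)}$, and the equivalence $\|\cdot\|_{\dot{H}^{1}}\sim\|\cdot\|_{\dot{H}^{1}_{c}}$ from \eqref{GrindEQ__2_1_} converts the absorbed piece into precisely the form appearing in \eqref{GrindEQ__4_11_}.

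The main obstacle is extracting the stated $R$-decay in $\mathcal{E}_{\mathrm{nlin}}$: a crude pointwise bound $|u|\lesssim \|u\|_{H^1}$ gives no smallness as $R\to\infty$, so Strauss's radial bound is what converts the exterior region into a genuine source of decay, and the Young exponents must be calibrated so that the absorbed piece carries the right power $\|\nabla u\|^2_{L^2}$ while the free piece carries the precise $R$-exponent appearing in \eqref{GrindEQ__4_11_}.
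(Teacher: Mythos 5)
Your proposal is correct and follows essentially the same route as the paper: apply Corollary \ref{cor 4.4.} with $a=\varphi_R$, observe that all discrepancies with the standard identity of Lemma \ref{lem 4.5.} live in $\{|x|>R\}$, bound the linear errors by $O(R^{-2})$ via mass conservation, and control the exterior nonlinear term by the radial (Strauss) Sobolev bound plus Young's inequality and the equivalence $\dot H^1\sim\dot H^1_c$. The only differences are cosmetic: you supply the Strauss argument that the paper merely cites from Lemma 3.4 of \cite{D18}, and you bound the inverse-square discrepancy directly in absolute value rather than splitting by the sign of $c$ as the paper does, both of which are fine.
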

\begin{proof}
We use the argument similar to that used to prove the localized virial estimates for INLS equation and NLS$_{c}$ equation (see \cite{D18,D18p}). Applying Corollary 4.3 with $a(x)=\varphi_{R}(x)$, we have
\begin{eqnarray}\begin{split}\nonumber
\frac{d^{2}}{dt^{2}}V_{\varphi_{R}}(t)&=-\int{\Delta^{2}\varphi_{R}(x)|u(t,x)|^{2}dx}+4\sum_{j,k=1}^{d}
{\int{\partial_{jk}^{2}\varphi_{R}(x)\textnormal{Re}(\partial_{k}u(t,x)\partial_{j}\bar{u}(t,x))dx}}\\
&+4c\int{\nabla \varphi_{R}(x)\cdot\frac{x}{|x|^{4}}|u(t,x)|^{2}dx}-\frac{2\sigma}{\sigma+2}
\int{\Delta \varphi_{R}(x)|x|^{-b}|u(t,x)|^{\sigma+2}dx}\\
&+\frac{4}{\sigma+2}\int{\nabla \varphi_{R}(x)\cdot \nabla(|x|^{-b})|u(t,x)|^{\sigma+2}dx}.
\end{split}\end{eqnarray}
Since $\varphi_{R}(x)=|x|^{2}$ for $|x|<R$, it follows from Lemma \ref{lem 4.5.} that
\begin{eqnarray}\begin{split}\nonumber
\frac{d^{2}}{dt^{2}}V_{\varphi_{R}}(t)&=8\left\|u(t)\right\| _{\dot{H}^{1}_{c}}^{2} -\frac{4(d\sigma+2b)}{\sigma+2} \int{|x|^{-b} \left|u(t,x)\right|^{\sigma +2}dx}-8\left\|u(t)\right\| _{\dot{H}^{1}_{c}(|x|>R)}^{2}\\
&+\frac{4(d\sigma+2b)}{\sigma+2} \int_{|x|>R}{|x|^{-b} \left|u(t,x)\right|^{\sigma +2}dx}-\int_{|x|>R}{\Delta^{2}\varphi_{R}(x)|u(t,x)|^{2}dx}\\
&+4\sum_{j,k=1}^{d}
{\int_{|x|>R}{\partial_{jk}^{2}\varphi_{R}(x)\textnormal{Re}(\partial_{k}u(t,x)\partial_{j}\bar{u}(t,x))dx}}\\
&+4c\int_{|x|>R}{\nabla \varphi_{R}(x)\cdot\frac{x}{|x|^{4}}|u(t,x)|^{2}dx}-\frac{2\sigma}{\sigma+2}
\int_{|x|>R}{\Delta \varphi_{R}(x)|x|^{-b}|u(t,x)|^{\sigma+2}dx}\\
&+\frac{4}{\sigma+2}\int_{|x|>R}{\nabla \varphi_{R}(x)\cdot \nabla(|x|^{-b})|u(t,x)|^{\sigma+2}dx}.
\end{split}\end{eqnarray}
Since $\Delta\varphi_{R}\lesssim1$, $\Delta^{2}\varphi_{R}\lesssim R^{-2}$ and $\nabla \varphi_{R}(x)\cdot \nabla(|x|^{-b})\lesssim |x|^{-b}$, we have
\begin{eqnarray}\begin{split}\nonumber
\frac{d^{2}}{dt^{2}}V_{\varphi_{R}}(t)&=8\left\|u(t)\right\| _{\dot{H}^{1}_{c}}^{2} -\frac{4(d\sigma+2b)}{\sigma+2} \int{|x|^{-b} \left|u(t,x)\right|^{\sigma +2}dx}\\
&-8\left\|u(t)\right\| _{\dot{H}^{1}_{c}(|x|>R)}^{2}
+4\sum_{j,k=1}^{d}
{\int_{|x|>R}{\partial_{jk}^{2}\varphi_{R}(x)\textnormal{Re}(\partial_{k}u(t)
\partial_{j}\bar{u}(t))dx}}\\
&+4c\int_{|x|>R}{\nabla \varphi_{R}(x)\cdot\frac{x}{|x|^{4}}|u(t,x)|^{2}dx}\\
&+O\left(\int_{|x|>R}
{R^{-2}|u(t)|^{2}+|x|^{-b}|u(t)|^{\sigma+2}dx}\right).
\end{split}\end{eqnarray}
Using \eqref{GrindEQ__4_10_} and the fact $$\partial^{2}_{jk}=\left(\frac{\delta_{jk}}{r}-\frac{x_{j}x_{k}}{r^{3}}\right)\partial_{r}
+\frac{x_{j}x_{k}}{r^{2}}\partial^{2}_{r},$$ we can see that
$$
4\sum_{j,k=1}^{d}{\partial_{jk}^{2}\varphi_{R}(\partial_{k}u\partial_{j}\bar{u})dx}\le2|\nabla u|^{2},~\nabla\varphi_{R}\cdot x \le 2|x|^{2}.
$$
Hence, we have
\begin{eqnarray}\begin{split}\nonumber
&-8\left\|u(t)\right\| _{\dot{H}^{1}_{c}(|x|>R)}^{2}+4\sum_{j,k=1}^{d}
{\int_{|x|>R}{\partial_{jk}^{2}\varphi_{R}\textnormal{Re}(\partial_{k}u
\partial_{j}\bar{u})dx}}+4c\int_{|x|>R}{\nabla \varphi_{R}(x)\cdot\frac{x}{|x|^{4}}|u|^{2}dx}\\
&~~~~~~~~~~\le 4c\int_{|x|>R}{\left(\nabla \varphi_{R}(x)\cdot x-2|x|^{2}\right)\frac{|u|^{2}}{|x|^{4}}}
=-4c\int_{|x|>R}{\left(2-\frac{\varphi'_{R}(r)}{r}\right)\frac{|u(t)|}{|x|^2}dx}\\
&~~~~~~~~~~\le \max\left\{-4cS,0\right\} \int_{|x|>R}{R^{-2}|u(t)|^{2}dx}\le \max\left\{-4cSM(u(t)),0\right\}R^{-2} ,
\end{split}\end{eqnarray}
where $S=\max_{r\ge 1}{2-\frac{\theta'(r)}{r}}$.
The conservation of mass implies
\begin{eqnarray}\begin{split} \nonumber
\frac{d^2}{dt^2}V_{\varphi_{R}}(t)&\le 8 \left\|u(t)\right\| _{\dot{H}^{1}_{c}}^{2} -\frac{4(d\sigma+2b)
}{\sigma+2} \int{|x|^{-b} \left|u(t,x)\right|^{\sigma +2}dx}\\
&+O\left(R^{-2}+\int_{|x|>R}{|x|^{-b}|u(t)|^{\sigma+2}dx}\right).
\end{split}\end{eqnarray}
Using the same argument as in the proof of Lemma 3.4 of \cite{D18}, it follows from the fact $\dot{H}^{1}\sim\dot{H}^{1}_{c}$, we have
\begin{equation}\nonumber
\int_{|x|>R}{|x|^{-b}|u(t)|^{\sigma+2}dx}\lesssim R^{-\frac{(d-1)\sigma+2b}{2}}\left\|u(t)\right\| _{\dot{H}^{1}_{c}}^{\frac{\sigma}{2}},
\end{equation}
whose proof will be omitted. Next we use the Young inequality
\footnote[2]{\ Let $a$, $b$ be non-negative real numbers and $p$, $q$ be positive real numbers satisfying $\frac{1}{p}+\frac{1}{q}=1$. Then for any $\varepsilon$, we have $ab\lesssim\varepsilon a^{p}+\varepsilon^{-\frac{q}{p}}b^{q}$.}
to get for any $\varepsilon>0$,
$$
R^{-\frac{(d-1)\sigma+2b}{2}}\left\|u(t)\right\| _{\dot{H}^{1}_{c}}^{\frac{\sigma}{2}}\lesssim\varepsilon^{-\frac{\sigma}{4-\sigma}}R^{-{\frac{2\left[(d-1)\sigma+2b\right]}
{4-\sigma}}}+\varepsilon\left\|u(t)\right\| _{\dot{H}^{1}_{c}}^{2},
$$
this completes the proof.
\end{proof}

\subsection{Proof of Theorem \ref{thm 1.4.}}
In this subsection, we prove Theorem \ref{thm 1.4.}.
We divide the study in two cases: $E_{b,c}(u_{0})<0$ and $E_{b,c}(u_{0})\ge0$.
\begin{itemize}
  \item \textbf{The case $E_{b,c}(u_{0})<0$.}
\end{itemize}

First, we consider the case $xu_{0}\in L^{2}$. Applying the standard virial identity \eqref{GrindEQ__4_7_} and the conservation of energy, we have
\begin{eqnarray}\begin{split}\nonumber
\frac{d^2}{dt^2}\left\|x u(t)\right\|_{L^{2}}^{2}&=8 \left\|u(t)\right\| _{\dot{H}^{1}_{c}}^{2} -\frac{4(d\sigma^{\star}+2b)}{\sigma^{\star}+2} \int{|x|^{-b} \left|u(t,x)\right|^{\sigma^{\star} +2}dx}\\
&=4(d\sigma^{\star}+2b)E_{b,c}\left(u(t)\right)-2(d\sigma^{\star}-4+2b)\left\|u\right\|_{\dot{H}^{1}_{c}}^{2}<0,
\end{split}\end{eqnarray}
where we used the fact $d\sigma^{\star}-4+2b>0$.
By the classical argument of Glassey \cite{G77}, it follows that the solution $u$ blows up in finite time.

Next, we consider the case $u_{0}$ is radial.
Using the localized virial estimate \eqref{GrindEQ__4_11_} and the conservation of energy, we have

\begin{eqnarray}\begin{split} \nonumber
\frac{d^2}{dt^2}V_{\varphi_{R}}(t)&\le 8 \left\|u(t)\right\| _{\dot{H}^{1}_{c}}^{2} -\frac{4(d\sigma^{\star}+2b)
}{\sigma^{\star}+2} \int{|x|^{-b} \left|u(t,x)\right|^{\sigma^{\star} +2}dx}\\
&+O\left(R^{-2}+\varepsilon^{-\frac{\sigma^{\star}}{4-\sigma^{\star}}}R^{-{\frac{2\left[(d-1)\sigma^{\star}+2b\right]}
{4-\sigma^{\star}}}}+\varepsilon\left\|u(t)\right\| _{\dot{H}^{1}_{c}}^{2}\right)\\
&=4(d\sigma^{\star}+2b)E_{b,c}\left(u(t)\right)-2(d\sigma^{\star}-4+2b)\left\|u\right\|_{\dot{H}^{1}_{c}}^{2}\\
&+O\left(R^{-2}+\varepsilon^{-\frac{\sigma^{\star}}{4-\sigma^{\star}}}R^{-{\frac{2\left[(d-1)\sigma^{\star}+2b\right]}
{4-\sigma^{\star}}}}+\varepsilon\left\|u(t)\right\| _{\dot{H}^{1}_{c}}^{2}\right),
\end{split}\end{eqnarray}
for any $t$ in the existence time and for any $\varepsilon>0$. Since $d\sigma^{\star}-4+2b>0$, we take $\varepsilon>0$ small enough and $R>1$ large enough depending on $\varepsilon$ to have that
\begin{equation}\nonumber
\frac{d^2}{dt^2}V_{\varphi_{R}}(t)\le2(d\sigma^{\star}+2b)E_{b,c}\left(u_{0}\right)<0,
\end{equation}
for any $t$ in the existence time. This shows that the solution $u$ must blow up in finite time.
\begin{itemize}
  \item \textbf{The case $E_{b,c}(u_{0})\ge0$.}
\end{itemize}

By the definition of the energy \eqref{GrindEQ__1_6_} and Lemma 4.1, we have
\begin{eqnarray}\begin{split}\nonumber
E_{b,c}\left(u(t)\right)&=\frac{1}{2} \left\| u(t)\right\| _{\dot{H}^{1}_{c}}^{2} -\frac{1}{\sigma^{\star}+2} \left\| |x|^{-b} |u|^{\sigma^{\star}+2} \right\|_{L^{1}}\\
&\ge \frac{1}{2} \left\| u(t)\right\| _{\dot{H}^{1}_{c}}^{2} -\frac{C_{HS}(b,\bar{c})^{\sigma^{\star}+2}}{\sigma^{\star}+2}\left\| u(t)\right\| _{\dot{H}^{1}_{c}}^{\sigma^{\star}+2}=
:g\left(\left\| u(t)\right\| _{\dot{H}^{1}_{c}}\right),
\end{split}\end{eqnarray}
where
\begin{equation}\label{GrindEQ__4_12_}
g(y)=\frac{1}{2}y^2-\frac{C_{HS}(b,\bar{c})^{\sigma^{\star}+2}}{\sigma^{\star}+2}y^{\sigma^{\star}+2}.
\end{equation}
It also follows from \eqref{GrindEQ__4_6_} that
\[g(\left\|W_{b,\bar{c}}\right\|_{\dot{H}^{1}_{\bar{c}}})=E_{b,\bar{c}}(W_{b,\bar{c}}).\]
By the conservation of energy and the assumption $E_{b,c}(u_{0})<E_{b,\bar{c}}(W_{b,\bar{c}})$, we can see that
\[g(\left\| u(t)\right\| _{\dot{H}^{1}_{c}})\le E_{b,c}\left(u(t)\right)=E_{b,c}\left(u_{0}\right)<E_{b,\bar{c}}(W_{b,\bar{c}}).\]
By the assumption $\left\|u_{0}\right\|_{\dot{H}^{1}_{c}}>\left\|W_{b,\bar{c}}\right\|_{\dot{H}^{1}_{\bar{c}}}$ and the continuity argument, we have
\begin{equation}\label{GrindEQ__4_13_}
\left\|u(t)\right\|_{\dot{H}^{1}_{c}}>\left\|W_{b,\bar{c}}\right\|_{\dot{H}^{1}_{\bar{c}}},
\end{equation}
for any $t$ as long as the solution exists. \eqref{GrindEQ__4_13_} is improved as follows. Pick $\delta>0$ small enough such that
\begin{equation}\label{GrindEQ__4_14_}
E_{b,c}(u_{0})\le(1-\delta)E_{b,\bar{c}}(W_{b,\bar{c}}),
\end{equation}
which implies that
\begin{equation}\label{GrindEQ__4_15_}
g(\left\| u(t)\right\| _{\dot{H}^{1}_{c}})\le(1-\delta)E_{b,\bar{c}}(W_{b,\bar{c}}).
\end{equation}
Using \eqref{GrindEQ__4_6_}, \eqref{GrindEQ__4_12_} and \eqref{GrindEQ__4_15_}, we have
\begin{equation}\nonumber
\frac{d-b}{2-b}\left(\frac{\left\| u(t)\right\| _{\dot{H}^{1}_{c}}}{\left\| W_{b,\bar{c}}\right\| _{\dot{H}^{1}_{\bar{c}}}}\right)^{2}-\frac{d-2}{2-b}\left(\frac{\left\| u(t)\right\| _{\dot{H}^{1}_{c}}}{\left\| W_{b,\bar{c}}\right\| _{\dot{H}^{1}_{\bar{c}}}}\right)^{\sigma^{\star}+2}\le 1-\delta.
\end{equation}
The continuity argument shows that there exits $\delta'>0$ depending on $\delta$ such that
\begin{equation}\label{GrindEQ__4_16_}
\frac{\left\| u(t)\right\| _{\dot{H}^{1}_{c}}}{\left\| W_{b,\bar{c}}\right\| _{\dot{H}^{1}_{c}}}\ge1+\delta'.
\end{equation}
Then we can take $\varepsilon>0$ small enough such that
\begin{equation}\label{GrindEQ__4_17_}
8\left\| u(t)\right\| _{\dot{H}^{1}_{c}}^{2} -\frac{4(d\sigma^{\star}+2b)}{\sigma^{\star}+2} \left\| |x|^{-b} |u|^{\sigma^{\star}+2} \right\|_{L^{1}}+\varepsilon\left\| u(t)\right\| _{\dot{H}^{1}_{c}}^{2}\le -c<0,
\end{equation}
for any $t$ in the existence time. In fact, using the conservation of energy, \eqref{GrindEQ__4_6_}, \eqref{GrindEQ__4_14_} and \eqref{GrindEQ__4_16_}, we have
\begin{eqnarray}\begin{split}\nonumber
\textrm{LHS(4.17)}&=4(d\sigma^{\star}+2b)E_{b,c}\left(u(t)\right)+(8+\varepsilon-2d\sigma^{\star}-4b)\left\| u(t)\right\| _{\dot{H}^{1}_{c}}^{2}\\
&\le4(1-\delta)(d\sigma^{\star}+2b)E_{b,\bar{c}}(W_{b,\bar{c}})+(8+\varepsilon-2d\sigma^{\star}-4b)(1+\delta')^{2}\left\| W_{b,\bar{c}}\right\| _{\dot{H}^{1}_{\bar{c}}}^{2}\\
&=\left\|W_{b,\bar{c}}\right\|_{\dot{H}^{1}_{\bar{c}}}^{2}\left[\frac{8(2-b)}{d-2}\left(1-\delta-(1+\delta')^{2}\right)
+\varepsilon(1+\delta')^{2}\right].
\end{split}\end{eqnarray}
Hence, by taking $\varepsilon>0$ small enough, we can get \eqref{GrindEQ__4_17_}.

First, we consider the case $xu_{0}\in L^{2}$ satisfying $E_{b,c}(u_{0})<E_{b,\bar{c}}(W_{b,\bar{c}})$ and $\left\|u_{0}\right\|_{\dot{H}^{1}_{c}}>\left\|W_{b,\bar{c}}\right\|_{\dot{H}^{1}_{\bar{c}}}$. Using the standard virial identity \eqref{GrindEQ__4_7_} and \eqref{GrindEQ__4_17_}, we have
\begin{equation}\nonumber
\frac{d^2}{dt^2}\left\|xu(t)\right\|_{L^{2}}^{2}=8\left\| u(t)\right\| _{\dot{H}^{1}_{c}}^{2} -\frac{4(d\sigma^{\star}+2b)}{\sigma^{\star}+2} \left\| |x|^{-b} |u|^{\sigma^{\star}+2} \right\|_{L^{1}}\le -c<0,
\end{equation}
which implies that the solution blows up in finite time.

Next, we consider the case $u_{0}$ is radial, and satisfies $E_{b,c}(u_{0})<E_{b,\bar{c}}(W_{b,\bar{c}})$ and $\left\|u_{0}\right\|_{\dot{H}^{1}_{c}}>\left\|W_{b,\bar{c}}\right\|_{\dot{H}^{1}_{\bar{c}}}$. Using the localized virial estimates \eqref{GrindEQ__4_11_}, we have
\begin{eqnarray}\begin{split}\nonumber
\frac{d^2}{dt^2}V_{\varphi_{R}}(t)\le& 8\left\| u(t)\right\| _{\dot{H}^{1}_{c}}^{2} -\frac{4(d\sigma^{\star}+2b)}{\sigma^{\star}+2} \left\| |x|^{-b} |u|^{\sigma^{\star}+2} \right\|_{L^{1}}\\
&+O\left(R^{-2}+\varepsilon^{-\frac{\sigma^{\star}}{4-\sigma^{\star}}}R^{-{\frac{2\left[(d-1)\sigma^{\star}+2b\right]}
{4-\sigma^{\star}}}}+\varepsilon\left\|u(t)\right\| _{\dot{H}^{1}_{c}}^{2}\right),
\end{split}\end{eqnarray}
for any $\varepsilon>0$ and any $t$ in the existence time. Taking $\varepsilon>0$ small enough and $R>1$ large enough depending on $\varepsilon$, it follows from \eqref{GrindEQ__4_17_} that
$$
\frac{d^2}{dt^2}V_{\varphi_{R}}(t)\le-c/2<0,
$$
which implies that the solution must blow up in finite time. This completes the proof.



\begin{thebibliography}{26}
\bibitem{AK211} \small{An, J., Kim, J.:  Local well-posedness for the inhomogeneous nonlinear Schr\"{o}dinger equation in $H^{s}(\mathbb R^{n} )$. \textit{Nonlinear Anal. Real World Appl.}, \textbf{59}, 103268 (2021)}
\bibitem{AC21} \small{Ardila, A. H., Cardoso, M.:  Blow-up solutions and strong instability of
ground states for the inhomogeneous nonlinear Schr\"{o}dinger equation. \textit{Commun. Pure Appl. Anal.}, \textbf{20}(1), 101--119 (2021)}
\bibitem{BPVT07} \small{Belmonte-Beitia, J., P\'{e}rez-Garc\'{i}a, V. M.,  Vekslerchik, V.,  Torres, P. J.: Lie symmetries and solitons in nonlinear systems with spatially inhomogeneous nonlinearities. \textit{Phys. Rev. Lett.}, \textbf{98}(6), 064102 (2007)}
\bibitem{BM18} \small{Bouclet, J. M., Mizutani, H.: Uniform resolvent and Strichartz estimates for Schr\"{o}dinger equations with critical singularities. \textit{Trans. Amer. Math. Soc.}, \textbf{370}, 7293--7333 (2018)}
\bibitem{BPST03} \small{Burq, N., Planchon, F., Stalker, J., Tahvildar-Zadeh, A.S.: Strichartz estimates for the wave and Schr\"{o}dinger equations with the inverse-square potential. \textit{J. Funct. Anal.}, \textbf{203}, 519--549 (2003)}
\bibitem{C21} \small{Campos, L.: Scattering of radial solutions to the inhomogeneous nonlinear Schr\"{o}dinger equation. \textit{Nonlinear Anal.}, \textbf{202}, 112118 (2021)}
\bibitem{CG21} \small{Campos, L., Guzm\'{a}n, C. M.: On the inhomogeneous NLS with inverse-square potential. \textit{Z. Angew. Math. Phys.}, \textbf{72:143}, (2021) Avaliable at  https://doi.org/10.1007/s0033-021-01560-4.}
\bibitem{C03} \small{Cazenave, T.: Semilinear Schr\"{o}dinger Equations, Amer. Math. Soc., Providence, (2003)}
\bibitem{D18} \small{Dinh, V. D.: Blowup of $H^{1} $ solutions for a class of the focusing inhomogeneous nonlinear Schr\"{o}dinger equation. \textit{Nonlinear Anal.}, \textbf{174}, 169--188 (2018)}
\bibitem{D18p} \small{Dinh, V. D.: Global exsitence and blowup for a class of focusing nonlinear Schr\"{o}dinger equation with inverse-square potential. \textit{J. Math. Anal. Appl.}, \textbf{468}, 270--303 (2018)}
\bibitem{D21} \small{Dinh, V. D.: Global dynamics for a class of inhomogeneous nonlinear Schr\"{o}dinger equations with potential. \textit{Math. Nachr.}, \textbf{294}(4), 672--716 (2021)}
\bibitem{G77} \small{Glassey, R. T.: On the blowing up of solutions to the Cauchy problem for nonlinear Schr\"{o}dinger equations. \textit{J. Math. Phys.}, \textbf{18}, 1794--1797 (1977)}
\bibitem{G17} \small{Guzm\'{a}n, C. M.: On well posedness for the inhomogeneous nonlinear Schr\"{o}dinger equation. \textit{Nonlinear Anal. Real World Appl.} \textbf{37}, 249--286 (2017)}
\bibitem{HYZ12} \small{Hajaiej, H., Yu, X., Zhai, Z.: Fractional Gagliardo-Nirenberg and Hardy inequalities under Lorentz norms. \textit{J. Math. Anal. Appl.}, \textbf{396}, 569--577 (2012)}
\bibitem{KSWW75} \small{Kalf, H., Schmincke, U. W., Walter, J., Wust, R.: On the spectral theory of Schr\"{o}dinger and Dirac operators with strongly singular potentials, in: Spectral Theory and Differential Equations, in: Lect. Notes in Math., vol. 448, Springer, Berlin, 1975}
\bibitem{KP04} \small{Kang, D. S., Peng, S. J.: Positive solutions for elliptic equations with critical Sobolev-Hardy exponents. \textit{Appl. Math. Lett.}, \textbf{17}, 411--416 (2004)}
\bibitem{KMVBT17} \small{Kartashov, Y. V., Malomed, B. A., Vysloukh, V. A., Belic, M. R., Torner, L.: Rotating vortex clusters in media with inhomogeneous defocusing nonlinearity. \textit{Optic Letters.}, \textbf{42}(3), 446--449 (2017)}
\bibitem{KMVZZ17} \small{Killip, R., Miao, C., Visan, M., Zhang, J., Zheng, J.: The energy-critical NLS with inverse-square potential. \textit{Discrete Contin. Dyn. Syst.} \textbf{37}, 3831--3866 (2017)}
\bibitem{KMVZZ18} \small{Killip, R., Miao, C., Visan, M., Zhang, J., Zheng, J.: Sobolev spaces adapted to the Schr\"{o}dinger operator with inverse-square potential. \textit{Math. Z.}, \textbf{288}(3-4), 1273--1298 (2018)}
\bibitem{KMVZ17} \small{Killip, R., Murphy, J., Visan, M., Zheng, J.: The focusing cubic NLS with inverse-square potential in three space dimensions. \textit{Differential Integral Equations}, \textbf{30}(3-4), 161--206 (2017)}
\bibitem{LP15} \small{Linares, F., Ponce, G.: Introduction to Nonlinear Dispersive Equations, second ed., Universitext. Springer, New York, 2015}
\bibitem{L19} \small{Luo, X.: Stability and multiplicity of standing waves for the inhomogeneous NLS equation with a harmonic potential. \textit{Nonlinear Anal. Real World Appl.}, \textbf{45}, 688--703 (2019)}
\bibitem{MM18} \small{Lu, J., Miao, C., Murphy, J.: Scattering in $H^{1}$ for the intercritical NLS with an inverse-square potential. \textit{J. Differential Equations}, \textbf{264}(5), 3174--3211 (2018)}
\bibitem{S16} \small{Suzuki, T.: Solvability of nonlinear Schr\"{o}dinger equations with some critical singular potential via generalized Hardy-Rellich inequalities. \textit{Funkcial. Ekvac.}, \textbf{59}(1), 1--34 (2016)}
\bibitem{WHHG11} \small{Wang, B. X., Huo, Z., Hao, C., Guo, Z.: Harmonic Analysis Method for Nonlinear Evolution Equations, I, World Scientific, Singapore, 2011}
\bibitem{Y21} \small{Yang, K.: Scattering of the focusing energy-critical NLS with inverse-square potential in the radial case. \textit{Comm. Pure Appl. Anal.}, \textbf{20}(1), 77--99 (2021)}
\end{thebibliography}
\end{document}